\numberwithin{equation}{section}
\newtheorem{thm}{Theorem}[section]
\newtheorem{lem}[thm]{Lemma}
\newtheorem{prop}[thm]{Proposition}
\theoremstyle{definition}
\newtheorem{rem}[thm]{Remark}
\newcommand\R{{\mathbb R}}
\newcommand\N{{\mathbb N}}
\newcommand\cont{{\mathcal C}}
\newcommand\Ens{{\mathcal E}}
\newcommand\Fns{{\mathcal F}}
\newcommand\Cset{ K }
\newcommand\GCT[1]{\Lambda_{#1}}
\newcommand\GDT[1]{A_{#1}}
\newcommand\MScN[1]{\href{http://www.ams.org/mathscinet-getitem?mr=#1}{\nolinkurl{(#1)}}}
\newcommand\DOI[1]{\href{http://dx.doi.org/#1}{(doi: \nolinkurl{#1})}}
\newcommand\LINK[1]{\href{#1}{(link: \nolinkurl{#1})}}
\begin{document}
\title[Blow-up set for the nonlinear wave equation]
{Solutions blowing up on any given compact set for the energy subcritical wave equation}
\author[T. Cazenave]{Thierry Cazenave$^1$}
\email{\href{mailto:thierry.cazenave@sorbonne-universite.fr}{thierry.cazenave@sorbonne-universite.fr}}

\author[Y. Martel]{Yvan Martel$^{2}$}
\email{\href{mailto:yvan.martel@polytechnique.edu}{yvan.martel@polytechnique.edu}}

\author[L. Zhao]{Lifeng Zhao$^3$}
\email{\href{mailto:zhaolf@ustc.edu.cn}{zhaolf@ustc.edu.cn}}

\address{$^1$Sorbonne Universit\'e, CNRS, Universit\'e de Paris, Laboratoire Jacques-Louis Lions,
B.C. 187, 4 place Jussieu, 75252 Paris Cedex 05, France}

\address{$^2$CMLS, \'Ecole Polytechnique, CNRS, 91128 Palaiseau Cedex, France}

\address{$^3$Wu Wen-Tsun Key Laboratory of Mathematics and School of Mathematical Sciences, University of Science and Technology of China, Hefei 230026, Anhui, China}

\subjclass[2010]{Primary 35L05; secondary 35B44, 35B40}

\keywords{nonlinear wave equation, finite-time blowup, blow-up set}

\begin{abstract}
We consider the focusing energy subcritical nonlinear wave equation $\partial_{tt} u - \Delta u= |u|^{p-1} u$ in ${\mathbb R}^N$, $N\ge 1$.
Given any compact set $ K \subset {\mathbb R}^N $, we construct finite energy solutions which blow up at $t=0$ exactly on $ K $.

The construction is based on an appropriate ansatz. The initial ansatz is simply $U_0(t,x) = \kappa (t + A(x) )^{ -\frac {2} {p-1} }$, where $A\ge 0$ vanishes exactly on $ K $, which is a solution of the ODE $h'' = h^p$. 
We refine this first ansatz inductively using only ODE techniques and taking advantage of the fact that (for suitably chosen $A$), space derivatives are negligible with respect to time derivatives. 
We complete the proof by an energy argument and a compactness method.
\end{abstract}
\maketitle

\section{Introduction}
We consider the focusing nonlinear wave equation on $\R^N$
\begin{equation}\label{wave}
\partial_{tt} u - \Delta u= |u|^{p-1} u, \quad (t,x)\in\R\times\R^N,
\end{equation}
for any space dimension $N\geq 1$,
and   energy subcritical nonlinearities, \emph{i.e.}
\begin{equation}\label{on:p}
\mbox{$1<p<\infty$ if $N=1,2$ and } 1<p<\frac{N+2}{N-2} \mbox{ if $N\geq 3$}.
\end{equation}
It is well-known that under such condition on $p$ the Cauchy problem for \eqref{wave} is locally well-posed in the energy space $H^1(\R^N)\times L^2(\R^N)$
(see \cite{GV85, GV89, Segal}). For $H^1\times L^2$ solutions, the energy 
\[
E(u(t),\partial_tu(t))=
\int \left\{ \frac 12  |\partial_t u(t,x)|^2 +\frac 12  |\nabla u(t,x)|^2 - \frac 1{p+1}  |u(t,x)|^{p+1}\right\}dx
\]
is conserved through time. Moreover, it is known how to produce solutions blowing up in finite time
(see \emph{e.g.} \cite{Keller,Levine}).

Our main result states that for any given compact set $ \Cset $ of $\R^N$, there exists a finite-energy solution of \eqref{wave} which blows up in finite time exactly on~$ \Cset $.

\begin{thm}\label{TH:1}
Let $p$ satisfy \eqref{on:p} and let $ \Cset $ be any nonempty compact set of $\R^N$. There exist $\delta_0>0$ and a solution $(u,\partial_t u)\in \cont((0,\delta_0];H^1(\R^N)\times L^2(\R^N))$ of \eqref{wave}
 which blows up at time $0$ exactly on~$ \Cset $ in the following sense.
\begin{itemize}
\item If $x_0\in  \Cset $ then for any $r>0$,
\begin{equation}\label{xE}
\lim_{t\downarrow 0} \|u(t)\|_{L^2(|x-x_0|<r)}=\infty\quad \mbox{and}\quad \lim_{t\downarrow 0} \|\partial_t u(t)\|_{L^2(|x-x_0|<r)}=\infty.
\end{equation}
\item If $x_0\not\in  \Cset $ then there exists $r>0$ such that
\begin{equation}\label{xnotE}
\sup_{t\in(0,\delta_0]}\left\{\|u(t)\|_{L^2(|x-x_0|<r)}+\|\nabla u(t)\|_{L^2(|x-x_0|<r)}+\|\partial_t u(t)\|_{L^2(|x  -x_0 |<r)}\right\}<\infty.
\end{equation}
\end{itemize}
\end{thm}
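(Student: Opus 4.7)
The plan is to follow the strategy sketched in the abstract: build an explicit ansatz $U_N$ on $(0,\delta_0]\times\R^N$ that blows up at $t=0$ exactly on $\Cset$ and is an approximate solution of \eqref{wave} with arbitrarily small residual, then solve the Cauchy problem for \eqref{wave} from the data $(U_N(\epsilon),\partial_t U_N(\epsilon))$ at time $t=\epsilon>0$ and pass to the limit $\epsilon\downarrow 0$ via a uniform energy estimate on the difference $u^\epsilon - U_N$. To start, I would fix a smooth nonnegative function $A:\R^N\to[0,\infty)$ whose zero set is exactly $\Cset$ and which vanishes on $\Cset$ to very high order, in the sense that $|\nabla A|^2 + |\Delta A|\le C A^\theta$ for a parameter $\theta$ chosen large; such an $A$ is obtained by mollifying the distance function to $\Cset$ and raising it to a large power. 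Setting $\kappa=[2(p+1)/(p-1)^2]^{1/(p-1)}$, the zeroth ansatz $U_0(t,x)=\kappa(t+A(x))^{-2/(p-1)}$ solves $\partial_{tt} U_0 = U_0^p$ identically and blows up precisely on $\Cset$ at $t=0$; its PDE residual $-\Delta U_0$ is of strictly lower order than $U_0^p$ because of the flatness of $A$.

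I would then refine $U_0$ inductively, writing $U_{n+1}=U_n+V_n$ where, at each fixed $x$, $V_n$ solves the linearized ODE $\partial_{tt} V_n - p\kappa^{p-1}(t+A)^{-2}V_n = -\mathcal{E}_n$ with $\mathcal{E}_n$ the current PDE residual. This is an Euler-type equation whose homogeneous solutions are $(t+A)^{2p/(p-1)}$ and $(t+A)^{-(p+1)/(p-1)}$, and source terms of the form $g(x)(t+A)^\gamma$ admit the explicit particular solution $h(x)(t+A)^{\gamma+2}$ with $h = g/[(\gamma+1)(\gamma+2) - p\kappa^{p-1}]$; one checks that the denominator never vanishes for the exponents arising in the induction. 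Each step gains positive powers of $A$, so after finitely many iterations $U_N$ has residual $\mathcal{E}_N$ as small as desired in appropriate norms down to $t=0$, while $U_N-U_0$ remains lower order than $U_0$.

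For each $\epsilon\in(0,\delta_0]$ let $u^\epsilon$ be the solution of \eqref{wave} launched from $(U_N(\epsilon),\partial_t U_N(\epsilon))$ at $t=\epsilon$ (available by local well-posedness in $H^1\times L^2$) and write $u^\epsilon = U_N + w^\epsilon$. The difference satisfies an equation of the form $\partial_{tt} w^\epsilon - \Delta w^\epsilon - p|U_N|^{p-1}w^\epsilon = Q(U_N,w^\epsilon) - \mathcal{E}_N$ with vanishing Cauchy data at $t=\epsilon$, the remainder $Q$ being at least quadratic in $w^\epsilon$. A weighted energy estimate in $H^1\times L^2$, closed by a bootstrap or Gronwall argument exploiting the smallness of $\mathcal{E}_N$, should yield a bound on $(w^\epsilon,\partial_t w^\epsilon)$ uniform in $\epsilon$. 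A diagonal compactness extraction then produces a solution $u=U_N+w$ of \eqref{wave} on $(0,\delta_0]$ with $w\in\cont((0,\delta_0];H^1\times L^2)$ bounded down to $t=0$. Properties \eqref{xE} and \eqref{xnotE} are then read off the behavior of $U_N$: near $x_0\in\Cset$, $A$ is arbitrarily small so $U_N$ diverges in $L^2(|x-x_0|<r)$ as $t\downarrow 0$ once $\theta$ is chosen large enough relative to $N$ and $p$, and the divergence survives the addition of the bounded $w$; off $\Cset$, $A$ admits a positive lower bound on a neighborhood, so $U_N$ and its first derivatives remain uniformly bounded in time, and so do those of $u$.

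The main obstacle I anticipate is the uniform energy estimate: the coefficient $p|U_N|^{p-1}\sim (t+A(x))^{-2}$ of the linearized term is not time-integrable at points of $\Cset$, so a naive Gronwall loop loses control near $t=0$. The resolution should be to pay with the smallness of $\mathcal{E}_N$: by taking the depth $N$ of the induction large, the residual carries enough compensating positive powers of $(t+A)$ that the effective source in the energy inequality becomes integrable in $t$ uniformly in $x$, and a suitable joint choice of $\theta$ and $N$ closes the bootstrap.
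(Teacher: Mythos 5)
Your overall architecture (flat profile $A$, ansatz $U_0=\kappa(t+A)^{-2/(p-1)}$, ODE-based inductive refinement at each fixed $x$, uniform energy bound on approximating solutions launched from $t=\varepsilon$, diagonal compactness) matches the paper's strategy. However, there is a genuine gap in the energy step, and you have in fact put your finger on it without resolving it.

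The obstacle you identify --- the coefficient $p|U_J|^{p-1}\sim (t+A(x))^{-2}$ in the linearized equation is not time-integrable on $\Cset$ --- is real, but the remedy you propose (push the residual $\Ens_J$ to high order so that the source is integrable) does not close the estimate. The problem is not the inhomogeneity but the linear potential itself. For the equation $\partial_{tt}w-\Delta w - p|U_J|^{p-1}w = Q(U_J,w) + \Ens_J$ with the potential positive (hence destabilizing, as one expects near blow-up) and of size $t^{-2}$ on $\Cset$, the standard energy inequality gives $\frac{d}{dt}E\lesssim t^{-2}E+\ldots$, and Gronwall from $t=\varepsilon$ forward produces a factor $\exp\bigl(C(\tfrac1\varepsilon - \tfrac1t)\bigr)$ that blows up as $\varepsilon\downarrow 0$ regardless of how small the source is. Making $\Ens_J$ tiny does not tame the homogeneous growth. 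The paper's key additional idea, absent from your proposal, is a Liouville/gauge conjugation: write $\varepsilon_n = Q^{1/2}z$ with $Q=(1-\chi+U_0)^{p+1}$, so that $Q^{1/2}$ is (on the relevant region) essentially a solution of the linearized ODE $g''=f'(U_0)g$, which cancels the $t^{-2}$ potential. Indeed, after conjugation the new linear coefficient is $G=Q^{1/2}(f'(U_0)Q^{1/2}-\partial_{tt}Q^{1/2})$, which the paper shows is bounded in $L^\infty$, and $\partial_t Q\le 0$ contributes a favorable sign. The weighted energy functional $\energy$ is built around this structure. Without some version of this conjugation, your bootstrap does not close; the residual smallness (your conditions on $J$ and $\theta$, the paper's \eqref{def:Jk}--\eqref{on:k}) is needed only for the \emph{source} term in the conjugated energy identity, not as a substitute for the conjugation.

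Two secondary points. First, your iterative scheme assumes the residuals are sums of terms $g(x)(t+A)^\gamma$ admitting algebraic particular solutions $h(x)(t+A)^{\gamma+2}$; but $\Ens_{j}$ involves derivatives of $A$ entangled with powers of $t+A$ and is not of this exact product form. The paper instead uses the full variation-of-parameters formula $w_j=-c\bigl(U_0^{(p+1)/2}\int_0^t U_0^{-p}\Ens_{j-1}\,ds + U_0^{-p}\int_t^{t_{j-1}}U_0^{(p+1)/2}\Ens_{j-1}\,ds\bigr)$, and estimates these integrals, which is more robust. Second, your flatness condition $|\nabla A|^2+|\Delta A|\lesssim A^\theta$ controls only two derivatives; since the iterated correctors lose derivatives of $A$ at each step, one needs $|\partial_x^\beta A|\lesssim A^{1-|\beta|/k}$ for $|\beta|$ up to roughly $k-1$, as in \eqref{on:A}. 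Finally, for $1<p<2$ the nonlinearity is not $C^2$ and the quadratic Taylor expansion you invoke for $Q(U_J,w)$ needs the $\bar p=\min(p,2)$ refinement of \eqref{taylor0}--\eqref{taylor}; and it is convenient (though not strictly necessary) to truncate $f$ to $f_n$ to get global existence of the approximating solutions.
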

\begin{rem}\label{rk:2}
For $t>0$, the function
\begin{equation}\label{def:h}
h(t)=\kappa t^{-\frac {2}{p-1}}\quad\mbox{where}\quad \kappa=\left[\frac {2(p+1)}{(p-1)^2}\right]^{\frac 1{p-1}}
\end{equation}
is a solution of the ordinary differential equation $h''=h^p$ which blows up at time~$0$.
It is also a solution of \eqref{wave}, but of course it fails to be in the energy space.
The function $h$ is the building block for our construction, it is thus relevant to compare it with the blow-up rate of the  solutions constructed in Theorem~\ref{TH:1}.
It follows from the proof that for any $0<\mu<\frac2{p-1}$ there exist solutions $u$ as in the statement of Theorem~\ref{TH:1}
satisfying in addition the following estimates: for any $x_0\in  \Cset $  $r>0$, and all $t\in (0,\delta_0]$,
\begin{gather}
 t^{-\mu} \lesssim \|u(t)\|_{L^2(|x-x_0|<r)}\lesssim t^{-\frac {2}{p-1}},\label{Qrp}\\
 t^{-\mu-1} \lesssim\|\partial_t u(t)\|_{L^2(|x-x_0|<r)}\lesssim t^{-\frac {2}{p-1}-1}.\label{dtQrp}
\end{gather}
Moreover, if $x_0\in  \Cset $ and $ \Cset $ contains a neighborhood of $x_0$ then it also holds, for any $r>0$,
and all $t\in (0,\delta_0]$,
\begin{equation}\label{jus}
\|u(t)\|_{L^2(|x-x_0|<r)}\gtrsim t^{-\frac {2}{p-1}}\quad \hbox{and}\quad
\|\partial_t u(t)\|_{L^2(|x-x_0|<r)}\gtrsim t^{-\frac {2}{p-1}-1}.
\end{equation}
In contrast, if $x_0$ is an isolated point of the compact set $ \Cset $, solutions $u$ as in Theorem~\ref{TH:1} can be chosen so that, for a small $r>0$,
\[
\lim_{t\downarrow 0} \left\{t^{\frac {2}{p-1}} \|u(t)\|_{L^2(|x-x_0|<r)}+
t^{\frac {2}{p-1}+1}\|\partial_t u(t)\|_{L^2(|x-x_0|<r)}\right\}=0.
\]
\end{rem}
To prove Theorem~\ref{TH:1}, we follow the strategy developed in \cite{CaMaZh} to construct blow-up solutions of ODE type for a class of semilinear Schr\"odinger equations. First, we construct an approximate solution to the blow-up problem based on the explicit blow-up solution $h$ defined by \eqref{def:h}. The main order term of the approximate solution is $U_0(t,x)=\kappa(t+A(x))^{-\frac{2}{p-1}}$, where $A$ is a suitable nonnegative function which vanishes exactly on $ \Cset $ and whose behavior at $\infty$ ensures that $U_0$ belongs to the energy space. Typically, to obtain blowup at only one point $x_0$, it suffices to consider $A(x)=|x-x_0|^k$ for $k$ large enough.
Compared to \cite{CaMaZh} where a simple ansatz such as $U_0$ is sufficient, at least for strong enough nonlinearities, the wave equation requires to introduce iterated refinements $U_J$ of this ansatz (the number of iterations $J\geq 1$ depends on $p$, see Remark~\ref{eRemStr1}). The basic idea is that for such blow-up profiles, the space derivatives are of lower order compared to time derivatives and to nonlinear terms. This allows to use only elementary arguments of ordinary differential equations for the construction of the refined ansatz $U_J(t,x)$, at fixed $x$. See Section~\ref{sec:2}.
(The construction by purely ODE techniques of an approximate blow-up solution to an Euler-Poisson system is done in~\cite{GuoHJ}.)

Second, we consider the sequence $(u_n)$ of solutions of the wave equation \eqref{wave} with initial data $u_n(\frac 1n)=U_J(\frac 1n)$. Using energy method in $H^1\times L^2$, we prove uniform estimates on this sequence on intervals
$[\frac 1n,\delta_0]$, where $\delta_0>0$ is uniform in~$n$ (see Section~\ref{sec:3}).
Passing to the limit $n\to \infty$ yields the solution $u$ of Theorem~\ref{TH:1}.

We point out that this strategy by approximate solution and compactness is also reminiscent to \cite{Martel,Merle1,RaphaelSzeftel} where global or blow-up solutions with special asymptotic behavior  are constructed using the reversibility of the equation and suitable uniform estimates on backwards solutions. 

For stability results concerning the solution $h$ \eqref{rk:2}, we refer to~\cite{Don-Sch}.  
For ODE-type blowup for quasilinear wave equations, see~\cite{Speck} and the references therein. 
We also refer to \cite{CollotGM}
where an ODE   blow-up profile similar to $U_0$ is used to construct blow-up solutions of the nonlinear heat equation with applications to the Burgers equation.

In this article, we restrict ourselves to energy subcritical power nonlinearities for simplicity, since
this framework allows us to use the energy method at the level of regularity $H^1\times L^2$ only.
However, the approximate solutions constructed in Section~\ref{sec:2} are relevant for any power nonlinearity, 
and we expect that a higher order energy method (to estimate higher order Sobolev norms) should be sufficient to extend the construction to energy critical or supercritical nonlinearities (at least for integer powers to avoid regularity issues).
\begin{rem} 
A more general question for nonlinear wave equations concerns the blow-up surface.
For a solution of \eqref{wave} with initial data at $t=0$, which is assumed to blow up in finite time, there exists a $1$-Lipschitz function $x\mapsto \phi(x)>0$ such that the solution is well-defined in a suitable sense in the maximal domain of influence
$D=\{(t,x):0\leq t<\phi(x)\}$, see \emph{e.g.} \cite{Alinhac}, Sections~III.2 and~III.3.
The surface $\{(\phi(x),x):x\in\R^N\}$ is called the blow-up surface.
The question of the regularity of blow-up surface is adressed in \cite{Alinhac,CaFr1, CaFr2,MerleZ2,MerleZ3}.
The question of constructing solutions of the nonlinear wave equation with prescribed blow-up surface (with sufficient regularity
and satisfying the space-like condition $\|\nabla \phi\|_{L^\infty}<1$) is also a classical question, adressed in several articles and books, notably  \cite{KiLi1, KiLi2}, \cite{Kiche1,Kiche2,Kibook}, \cite{KiQi} 
and \cite{Alinhac}. 
The approach by Fuschian reduction is especially well-described in the book \cite{Kibook}. 
First developed for analytic surfaces and exponential nonlinearity, this method was later extended to  surfaces with Sobolev regularity and to some power nonlinearities.
However, it is not clear to us whether the strategy described in \cite{Kibook} for constructing solutions with given blow-up surface can be extended to power nonlinearities $|u|^{p-1}u$ for any $p>1$, or to more general nonlinearities.

As discussed in \cite{Kibook,Kicontrol,KiQi}, prescribing the blow-up set of a blow-up solution can be seen as a sub-product of prescribing its blow-up surface. The solutions constructed in \cite{Kibook,Kicontrol,KiQi} may only exist in a space-time region around the blow-up surface, which does not guarantee that the solution is globally defined in space at any one specific time.
However, in the one dimensional case \cite[Corollary 1.2]{KiQi} actually proves the existence of smooth initial data leading to blowup on arbitrary compact set of $\R$, for any power nonlinearity.

We also would like to point out a difference between the above mentioned articles and our approach. Here, we resolutely work with finite energy solutions and the initial value problem for~\eqref{wave}. It is often argued that  finite speed of propagation and cut-off arguments allow to reduce to finite energy solutions. 
For example, the function \eqref{def:h} is used to claim that ODE-type blowup   is easy to reach for finite energy solutions. However, the cut-off necessary to localize the initial data could lead to blowup in an earlier time. Our method deals with these issues by constructing directly a finite energy solution with initial data
from a finite energy ansatz.
Moreover, we hope  that our somehow elementary approach can be of interest for its simplicity and its large range of applicability to other more complicated problems where ODE blowup is relevant.
\end{rem}
\subsection*{Notation}
We fix a smooth, even function $\chi:\R\to\R$ satisfying:
\begin{equation}\label{def:chi}
\hbox{$\chi\equiv 1$ on $[0,1]$, $\chi\equiv 0$ on $[2,\infty)$ and $\chi'\leq 0\leq \chi\leq 1$ on $[0,\infty)$.}
\end{equation}
For $p>1$ satisfying \eqref{on:p}, recall the well-known inequality, for any $u\in H^1$,
\begin{equation}\label{gagl}
\|u\|_{L^{p+1}}^{p+1} \lesssim\|u\|_{L^2}^{p+1-\frac{N}2(p-1)}\|\nabla u\|_{L^2}^{\frac{N}2(p-1)}.
\end{equation}
Let $f(u)=|u|^{p-1} u$ and $F(u)=\int_0^u f(v) dv$.
For future reference, we recall Taylor's formulas involving the functions $F$ and $f$.
Let $\bar p = \min(2,p)$.
First, we claim that for any $u>0$ and $v\in \R$,
\begin{equation}\label{taylor0}
\Big|F(u+v)-F(v)-F'(u)v-\frac 12F''(u)v^2\Big|
\lesssim |v|^{p+1}+u^{p-\bar p} |v|^{\bar p+1}.
\end{equation}
Indeed, in the region $|v|\geq \frac 12 u$, each term on the left-hand side is
bounded by $|v|^{p+1}$.
In the region $|v|\leq \frac 12 u$, we use Taylor's expansion to write
\begin{equation*}
\Big|F(u+v)-F(u)-F'(u)v-\frac 12F''(u)v^2\Big|\lesssim u^{p-2} |v|^3.
\end{equation*}
If $p\geq 2$, then $\bar p=2$ and \eqref{taylor0} is proved. If $1<p<2$, we finish by saying that in this case $u^{p-2} |v|^3\lesssim |v|^{p+1}$.
The same argument shows that
\begin{equation}\label{taylor1}
| ( f(u+v)- f( u )- f'(u)v) v |
\lesssim |v|^{p+1}+u^{p-\bar p} |v|^{\bar p+1}.
\end{equation}
Next, we claim that for any $u>0$ and $v\in \R$,
\begin{equation}\label{taylor}
\Big|f(u+v)-f(u)-f'(u)v-\frac 12f''(u)v^2\Big|\lesssim u^{-1}|v|^{p+1}+u^{p-\bar p-1}|v|^{\bar p+1}.
\end{equation}
Indeed, in the region $|v|\geq \frac 12 u$, each term on the left-hand side is
bounded by $ u^{-1} |v|^{p+1} $, and~\eqref{taylor} follows.
In the region $|v|\leq \frac 12 u$, we use Taylor's expansion to write
\begin{equation*}
\Big|f(u+v)-f(u)-f'(u)v-\frac 12f''(u)v^2\big|\lesssim u^{p-3} |v|^3.
\end{equation*}
If $p\geq 2$, then $\bar p=2$ and \eqref{taylor} is proved. If $1<p<2$, we finish by saying that in this case $u^{p-3} |v|^3\lesssim u^{-1} |v|^{p+1}$.

In this article, we will use multi-variate notation and results from \cite{CoSa}.
For any $\beta=(\beta_1,\ldots,\beta_N)\in \N^N$, $x=(x_1,\ldots,x_N)\in \R^N$, we set
\begin{align*}
&|\beta|=\sum_{j=1}^N \beta_j,\quad \beta!=\prod_{j=1}^N (\beta_j!),
\quad x^\beta=\prod_{j=1}^N x_j^{\beta_j},\\
& \partial_x^0=\mathop{\textnormal{Id}}\quad \mbox{and}\quad 
\partial_x^{\beta} = \frac{\partial^{|\beta|}}{\partial_{x_1}^{\beta_1}\ldots\partial_{x_N}^{\beta_N}}
\quad\mbox{for $|\beta|>0$.}
\end{align*}
For $\beta,\beta'\in \N^N$, we write
$\beta'\leq \beta$ if $\beta_j'\leq \beta_j$ for all $j=1,\ldots,N$. 
When $\beta'\leq \beta$, we set
\[
\binom{\beta}{\beta'}=\prod_{j=1}^N\binom{\beta_j}{\beta_j'} = \frac{\beta!}{\beta'!(\beta-\beta')!}.
\]
With this notation, given two functions $a,b:\R^{N}\to \R$, Leibniz's formula writes:
\begin{equation}\label{lbz0}
\partial_x^\beta \left(ab\right)=
\sum_{\beta'\leq\beta}\binom{\beta}{\beta'}\left(\partial_x^{\beta'}a\right)\left(\partial_x^{\beta-\beta'}b\right).
\end{equation}
We write $\beta'\prec\beta$ if one of the following holds
\begin{itemize}
\item $|\beta'|<|\beta|$;
\item $|\beta'|=|\beta|$ and $\beta_1'<\beta_1$;
\item $|\beta'|=|\beta|$, $\beta_1'=\beta_1$,\ldots, $\beta_\ell '=\beta_\ell $ and
$\beta_{\ell +1}'<\beta_{\ell +1}$ for some $1\leq \ell <N$.
\end{itemize}
Finally, we recall the Faa di Bruno formula (see Corollary~2.10 in~\cite{CoSa}).
Let $n=|\beta|\geq 1$. Then, for functions $q:\R\to \R$, $a:\R^{N}\to \R$,
\begin{equation}\label{fdb0}
\partial_x^{\beta} (q\circ a)=
\sum_{\nu=1}^n \left(q^{(\nu)}\circ a\right)\sum_{P(\beta,\nu)}(\beta!)
\prod_{ \ell =1}^n \frac{\left(\partial_x^{\beta_\ell }a\right)^{\nu_\ell }}{(\nu_\ell !)(\beta_\ell !)^{\nu_\ell }}
\end{equation}
where
\begin{align*}
P(\beta,\nu)
&=\Big\{(\nu_1,\ldots,\nu_n;\beta_1,\ldots,\beta_n): \mbox{there exists $1\leq m\leq n$ such that}\\
&\qquad \nu_\ell =0 \mbox{ and } \beta_\ell =0 \mbox{ for $1\leq \ell \leq n-m$} ;\, \nu_\ell >0 \mbox{ for $n-m+1\leq \ell \leq n$};\\
&\qquad \mbox{and } 0\prec\beta_{n-m+1}\prec\cdots\prec\beta_n \mbox{ are such that }
\sum_{\ell =1}^n \nu_\ell =\nu,\ \sum_{\ell =1}^n \nu_\ell  \beta_\ell  = \beta \Big\}.
\end{align*}
\section{The blow-up ansatz}\label{sec:2}
\subsection{Preliminary}\label{sec:2.1}
Recall that $h$ is the explicit solution \eqref{def:h} of the equation $h''=h^p$ which blows up at $0$.
The linearization of this equation around the solution $h$ yields the linear equation
\[
g''=ph^{p-1} g=\frac {2p(p+1)}{(p-1)^2}t^{-2} g
\]
which admits the following two independent solutions
\[
g_1(t)=t^{-\frac {p+1}{p-1}},\quad g_2(t)=t^{\frac {2p}{p-1}}, \quad  \text{for } t>0.
\]
Since $\frac{p+1}{p-1}>\frac 2{p-1}$, the function $g_1$, related to time invariance, is more singular at $0$ than the function $h$.
Note also that for a function $G$ satisfying $\int_0^1 s^{\frac{2p}{p-1}}| G(s)| ds<\infty$, a solution of the following linearized equation with source $G$
\[
g''=\frac {2p(p+1)}{(p-1)^2}t^{-2} g+G,
\]
is given by
\[
g(t)=-\frac{p-1}{3p+1}\left( t^{-\frac {p+1}{p-1}}\int_0^t s^{\frac {2p}{p-1}} G(s) ds
+t^\frac {2p}{p-1} \int_t^1 s^{-\frac {p+1}{p-1}} G(s) ds\right).
\]

\subsection{First blow-up ansatz}\label{s22}
Set
\begin{equation}\label{def:Jk}
J=\left\lfloor\frac {p+1}{p-1}\right\rfloor \quad \mbox{and} \quad k\geq 2J+2
\end{equation}
where $x\mapsto \lfloor x \rfloor$ is the floor function which maps $x$ to the greatest integer less than or equal to $x$.
(See Remark~\ref{eRemStr1} below for the explanation of the numbers  $J$ and $k$.)
We consider a function $A:\R^N\to\R$ of class $\cont^{k-1}$ on $\R^N$ and of class $\cont^k$ piecewise on $\R^N$ such that, for any $\beta\in \N^N$, with 
$|\beta|\leq k-1$, the following hold
\begin{equation} \label{on:A}
\begin{cases} 
A\geq 0 \hbox{ and }
|\partial_x^\beta A|\lesssim A^{1-\frac {|\beta|}{k}} &  \text{on }\R^N ,\\
 A(x)=|x|^k &  \text{for }x\in \R^N ,  |x|\ge 2 . 
\end{cases} 
\end{equation} 

\begin{rem}\label{rk:4}
Typical examples of such functions are $A(x):=|x|^k$, which vanishes at $0$ and 
\[
A(x):=
\begin{cases} 
0& \hbox{if $|x|\leq 1$}\\
(|x|-\chi(   |x| ))^k&\hbox{if $1< |x|\leq 2$}\\
|x|^k&\hbox{if $|x|>2$}
\end{cases}
\]
(where $\chi $ is given by~\eqref{def:chi}) which vanishes on the closed ball of center $0$ and radius $1$.
Another example, important for the proof of Theorem \ref{TH:1} is given in Section~\ref{S:proof}:
for any compact set $ \Cset $ of $\R^N$ included in the open ball of center $0$ and radius $1$,
there exists a function $A$ satisfying \eqref{on:A} which vanishes exactly on $ \Cset $.
\end{rem}
For $t>0$ and $x\in \R^N$, set
\[
U_0(t,x)=\kappa(t+A(x))^{-\frac{2}{p-1}}= h(W(t,x))\quad\mbox{where}\quad W(t,x)=t+A(x),
\]
so that $U_0$ satisfies $\partial_{tt} U_0 = f(U_0)$ on $(0,\infty)\times\R^N$.
Let 
\[
\Ens_0=- \partial_{tt} U_0 + \Delta U_0 -f(U_0)= \Delta U_0.
\]
We gather in the next lemma some estimates for $U_0$ and $\Ens_0$.
\begin{lem}\label{le:0}
The function $U_0$ satisfies
\begin{equation}\label{e:19}
\partial_t U_0 = -\left( \frac 2{p+1} U_0^{p+1}\right)^{\frac 12},\quad 
(\partial_t U_0)^2= \frac 2{p+1} U_0^{p+1}, \quad\partial_{tt} U_0 = U_0^{p}.
\end{equation}
Moreover, for any $\beta\in \N^N$, $\rho\in \R$, $0<t\leq 1$, $x\in\R^N$, the following hold.
\begin{enumerate}[label=\emph{(\roman*)}]
\item If $0\leq|\beta|\leq k-1$ and $|x|\leq 2$,
\begin{equation}\label{e:20}
|\partial_x^\beta(U_0^\rho)|\lesssim U_0^{\rho+\frac{|\beta|}{k}\frac{p-1}2},\quad
|\partial_t\partial_x^\beta(U_0^\rho)|\lesssim U_0^{\rho+(1+\frac{|\beta|}{k})\frac{p-1}2}.
\end{equation}
\item If $0\leq|\beta|\leq k-3$ and $|x|\leq 2$,
\begin{equation}\label{e:21}
|\partial_x^\beta\Ens_0|\lesssim U_0^{1+\frac{2+|\beta|}{k}\frac{p-1}2}.
\end{equation}
\item If $|x|>2$,
\begin{equation}\label{e:22}
|\partial_x^\beta U_0|\lesssim |x|^{-\frac{2k}{p-1}-|\beta|},
\quad
|\partial_x^\beta\Ens_0|\lesssim |x|^{-\frac{2k}{p-1}-|\beta|-2}.
\end{equation}
\end{enumerate}
Furthermore, for any $x_0\in \R^N$ such that $A(x_0)=0$, for any $r>0$, $0<t\leq 1$,
\begin{equation}\label{e:24}
t^{-\frac2{p-1}+\frac N{2k}}\lesssim \|U_0(t)\|_{L^2(|x-x_0|<r)}\lesssim t^{-\frac2{p-1}},
\end{equation}
\begin{equation}\label{e:25}
t^{-\frac2{p-1}-1+\frac N{2k}}\lesssim \|\partial_t U_0(t)\|_{L^2(|x-x_0|<r)}\lesssim t^{-\frac2{p-1}-1},
\end{equation}
where the implicit constants in \eqref{e:24} and \eqref{e:25} depend on $r$.
\end{lem}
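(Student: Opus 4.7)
The plan is to treat the four classes of statements separately: the pointwise ODE identities \eqref{e:19}, the spatial derivative bounds \eqref{e:20}--\eqref{e:21} in the inner region, the far-field estimate \eqref{e:22}, and finally the $L^2$ bounds \eqref{e:24}--\eqref{e:25}.

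First I would establish \eqref{e:19} by a purely one-dimensional ODE argument. Multiplying $h'' = h^p$ by $h'$ and integrating from $t$ to $\infty$ (using that $h, h' \to 0$ at infinity) yields $(h')^2 = \tfrac{2}{p+1}h^{p+1}$, and since $h>0$ is decreasing we get $h'(t) = -(2/(p+1))^{1/2}h(t)^{(p+1)/2}$. Composing with $W(t,x) = t + A(x)$, which satisfies $\partial_t W \equiv 1$, transfers these identities to $U_0 = h(W)$.

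Second, for the pointwise estimates on $|x|\le 2$, I would apply Faà di Bruno \eqref{fdb0} to the composition $U_0^\rho = q\circ A$ with $q(y) = \kappa^\rho(t+y)^{-2\rho/(p-1)}$. Each term is of the form $q^{(\nu)}(A)\prod_\ell(\partial_x^{\beta_\ell}A)^{\nu_\ell}$ with $\sum_\ell \nu_\ell = \nu$ and $\sum_\ell \nu_\ell|\beta_\ell| = |\beta|$. One has $|q^{(\nu)}(A)| \lesssim W^{-2\rho/(p-1)-\nu}$, and the assumption \eqref{on:A} gives $\prod_\ell|\partial_x^{\beta_\ell}A|^{\nu_\ell} \lesssim A^{\nu - |\beta|/k}$. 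Since $A\le W$ and the exponent $\nu - |\beta|/k$ is nonnegative for $|\beta|\le k-1$, this bound upgrades to $W^{\nu - |\beta|/k}$, so the whole term is bounded by $W^{-2\rho/(p-1) - |\beta|/k}$. Converting $W^{-\alpha} = \kappa^{-\alpha(p-1)/2}U_0^{\alpha(p-1)/2}$ gives precisely the first half of \eqref{e:20}. For the time-derivative version, I would note that \eqref{e:19} implies $\partial_t U_0^\rho = c_\rho U_0^{\rho + (p-1)/2}$, and reapply the spatial bound to this power. Estimate \eqref{e:21} then reduces to \eqref{e:20} with $\rho = 1$, since $\partial_x^\beta \Ens_0 = \sum_i \partial_x^{\beta + 2e_i}U_0$ and $|\beta|+2 \le k-1$.

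Third, for the far-field region $|x|>2$ where $A(x) = |x|^k$ is smooth, I would use $W \ge |x|^k$ and $|\partial_x^\beta |x|^k| \lesssim |x|^{k - |\beta|}$. Rerunning the same Faà di Bruno computation, the $\nu$-dependence cancels exactly: $|x|^{-2k/(p-1) - k\nu}\cdot |x|^{k\nu - |\beta|} = |x|^{-2k/(p-1) - |\beta|}$, which proves the bound on $U_0$ in \eqref{e:22}, and picks up an extra $|x|^{-2}$ for $\Ens_0$ after taking two further derivatives.

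Finally, for the $L^2$ estimates, the upper bounds are immediate from $W \ge t$, giving $U_0 \le \kappa t^{-2/(p-1)}$ and $|\partial_t U_0| \lesssim t^{-2/(p-1)-1}$ pointwise. For the lower bounds, the key observation is that $B := A^{1/k}$ is globally Lipschitz: indeed $|\nabla B| = \tfrac{1}{k}A^{1/k - 1}|\nabla A| \lesssim 1$ by \eqref{on:A}, and since $B(x_0) = 0$ this yields $A(x) \lesssim |x - x_0|^k$ on all of $\R^N$. Restricting the integral to $\{|x - x_0| < ct^{1/k}\}$ (a subset of $\{|x-x_0|<r\}$ for $t$ small), on which $W \lesssim t$, produces contributions of order $t^{-4/(p-1)}\cdot t^{N/k}$ and $t^{-4/(p-1)-2}\cdot t^{N/k}$ respectively, giving the stated lower bounds after taking square roots. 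The main obstacle I anticipate is the bookkeeping in the Faà di Bruno step — keeping track that the cancellation $\prod A^{\nu_\ell(1-|\beta_\ell|/k)} = A^{\nu - |\beta|/k}$ combines cleanly with $W^{-\nu}$ to eliminate the $\nu$ index — together with the small but essential trick of noticing the Lipschitz character of $A^{1/k}$ needed for the lower bound.
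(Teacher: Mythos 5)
Your proof is correct and follows essentially the same strategy as the paper: Fa\`a di Bruno combined with the structural assumption~\eqref{on:A} for the pointwise bounds, and a scaling/localization argument for the $L^2$ lower bounds. Two small variations are worth noting. For~\eqref{e:20} you apply Fa\`a di Bruno once, directly to $U_0^\rho = q\circ A$, whereas the paper first handles $\partial_x^\beta U_0$ via $h\circ W$ and then applies Fa\`a di Bruno a second time to $U_0^\rho$ viewed as a function of $U_0$; your one-pass version works equally well because the exponent in the product of $A$-factors telescopes to $\nu - |\beta|/k \ge 0$, which lets $A^{\nu-|\beta|/k}\lesssim W^{\nu-|\beta|/k}$ absorb all the $\nu$-dependence. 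For the lower bound in~\eqref{e:24}--\eqref{e:25}, the paper invokes Taylor's formula (noting that~\eqref{on:A} forces all derivatives of $A$ up to order $k-1$ to vanish where $A=0$) to get $A(x)\lesssim |x-x_0|^k$ near $x_0$, while you observe that $A^{1/k}$ is globally Lipschitz since $|\nabla(A^{1/k})|\lesssim A^{1/k-1}\cdot A^{1-1/k} = O(1)$; this is a clean alternative that yields the same quantitative conclusion and is perfectly adequate once one checks the standard fact that a continuous function with bounded gradient off its zero set is Lipschitz.
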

\begin{proof}
The identities in~\eqref{e:19} follow from the definition of $U_0$ and direct calculations.

Proof of \eqref{e:20}-\eqref{e:21}. For $0<t\leq 1$ and $|x|\leq 2$, one has $0<t+A(x)\lesssim 1$ and thus
$U_0\gtrsim 1$. From $U_0=h\circ W$, setting $n=|\beta|$ and using \eqref{fdb0}, one has
\[
\partial_x^\beta U_0=
\sum_{\nu=1}^n \left(h^{(\nu)}\circ W\right)\sum_{P(\beta,\nu)}(\beta!)
\prod_{ \ell =1}^n \frac{\left(\partial_x^{\beta_\ell }W\right)^{\nu_\ell }}{(\nu_\ell !)(\beta_\ell !)^{\nu_\ell }}.
\]
For $\nu\geq 1$, we have $|h^{(\nu)}\circ W|\lesssim W^{-\frac 2{p-1}-\nu}$.
Moreover, using the assumption \eqref{on:A}, we have, for $1\leq |\beta_\ell |\leq k-1$,
\[
|\partial_x^{\beta_\ell }W|\lesssim |\partial_x^{\beta_\ell } A|\lesssim A^{1-\frac{|\beta_\ell |}{k}}.
\]
Since $\sum_{\ell =1}^n \nu_\ell =\nu$, $\sum_{\ell =1}^n \nu_\ell |\beta_\ell |=|\beta|$ and $|\beta|\leq k-1$, we obtain
\begin{align*}
|\partial_x^\beta U_0|&\lesssim 
\sum_{\nu=1}^n W^{-\frac 2{p-1}-\nu} \sum_{P(\beta,\nu)} \prod_{ \ell =1}^n \left( A^{1-\frac{|\beta_\ell |}{k}}\right)^{\nu_\ell }\\
& \lesssim \sum_{\nu=1}^n W^{-\frac 2{p-1}-\nu} A^{\nu-\frac{|\beta|}{k}}
\lesssim W^{-\frac2{p-1}-\frac{|\beta|}k}\lesssim U_0^{1+\frac{|\beta|}{k}\frac{p-1}2},
\end{align*}
which proves the first estimate of \eqref{e:20} for $\rho=1$.
For $\rho\in\R$, using \eqref{fdb0}, we also have, for $1\leq n=|\beta|\leq k-1$,
\[
\partial_x^\beta(U_0^\rho)
=\sum_{\nu=1}^n \left[\rho\cdots(\rho-\nu+1)\right]U_0^{\rho-\nu}\sum_{P(\beta,\nu)}(\beta!)\prod_{\ell =1}^n
\frac{(\partial_x^{\beta_\ell } U_0)^{\nu_\ell }}{(\nu_\ell !)(\beta_\ell !)^{\nu_\ell }}.
\]
Using the above estimate on $|\partial_x^\beta U_0|$ and $\sum_{\ell =1}^n \nu_\ell =\nu$, $\sum_{\ell =1}^n \nu_\ell  |\beta_\ell |=|\beta|$, 
we obtain
\begin{equation*}
|\partial_x^\beta (U_0^\rho)|
 \lesssim \sum_{\nu=1}^n U_0^{\rho-\nu} \sum_{P(\beta,\nu)}\prod_{\ell =1}^n U_0^{\nu_\ell  \left[1+\frac{|\beta_\ell |}{k} \frac{p-1}2\right]} \lesssim \sum_{\nu=1}^n U_0^{\rho-\nu}U_0^{\nu+\frac{|\beta|}{k}\frac{p-1}2}
\lesssim U_0^{\rho+\frac{|\beta|}{k}\frac{p-1}2}.
\end{equation*}
Next, using the first identity in~\eqref{e:19}, we see that $\partial _t U_0^\rho = -\rho ( \frac {2} {p+1} )^{\frac {1} {2}} U_0^{\rho + \frac {p-1} {2}}$; and so the second estimate in~\eqref{e:20} follows from the first. 
Since $\Ens_0=  \Delta U_0$, \eqref{e:21} is an immediate consequence of the first estimate in~\eqref{e:20}. 

Estimate~\eqref{e:22}
 is a direct consequence of the definitions of $U_0$ and $\Ens_0$ and of the fact that $A(x)=|x|^k$ for $|x|>2$.
 
Proof of \eqref{e:24}-\eqref{e:25}. For any $x_0\in \R^N$ and $r>0$, the upper bounds in \eqref{e:24} and \eqref{e:25} are direct consequences of the estimates $0\leq U_0\lesssim t^{-\frac 2{p-1}}$
and $|\partial_t U_0|\lesssim t^{-\frac 2{p-1}-1}$.
Let $x_0\in \R^N$ be such that $A(x_0)=0$ and $r>0$. By \eqref{on:A} and the fact that the function $A$ is of class $\cont^k$ piecewise,
the Taylor formula implies that for any $x$ such that $|x-x_0|<r$, $|A(x)|\leq C(r) |x-x_0|^k$.
It follows that for such $x$, and for any $t\in (0,1]$,
$U_0^2(t,x)=\kappa^2 (t+A(x))^{-\frac{4}{p-1}}\gtrsim (t+|x-x_0|^k)^{-\frac{4}{p-1}}$.
The lower estimate in~\eqref{e:24} then follows from
\begin{equation} \label{label4}
\begin{split} 
\int_{|y |<r} (t+| y |^k)^{-\frac{4}{p-1}} dy&=
t^{-\frac4{p-1}+\frac Nk}\int_{|z|<t^{-\frac1k}r} (1+|z|^k)^{-\frac{4}{p-1}} dz\\
&\gtrsim
t^{-\frac4{p-1}+\frac Nk}\int_{|z|< r} (1+|z|^k)^{-\frac{4}{p-1}} dz
\gtrsim t^{-\frac4{p-1}+\frac Nk}.
\end{split} 
\end{equation} 
Estimate \eqref{e:25} is proved similarly.
\end{proof}

\subsection{Refined blow-up ansatz}\label{s23}
Starting from $U_0$, we define by induction a refined ansatz to the nonlinear wave equation.
Let $t_0=1$ and for any $j\in \{1,\ldots,J\}$, let $0<a_j\leq 1$ and $0<t_j\leq 1$ to be chosen later.
Let
\begin{align*}
w_j& = - \kappa^{\frac{p-1}2} \frac{p-1}{3p+1}
\left(U_0^{\frac {p+1}2} \int_0^t U_0^{-p} \Ens_{j-1} ds
+ U_0^{-p} \int_{t}^{t_{j-1}} U_0^{\frac {p+1}2} \Ens_{j-1} ds\right),
\\
U_j& =U_0+ \sum_{\ell =1}^j \chi_\ell  w_\ell ,\quad 
\Ens_j=-\partial_{tt}U_j+\Delta U_j+f(U_j),
\end{align*}
where $\chi_j(x)=\chi( A(x)/{a_j})$ and  $\chi $ satisfies~\eqref{def:chi}.

\begin{lem}\label{le:1}
There exist $0<a_J\leq \cdots\leq a_1\leq 1$ and $0<t_J\leq \cdots\leq t_1\leq 1$
such that for any $0\leq j\leq J$, for any $\beta\in \N^N$, $0<t\leq t_j$ and $x\in \R^N$, the following hold.
\begin{enumerate}[label=\emph{(\roman*)}]

\item If $1\leq j\leq J$, $0\leq |\beta|\leq k-1-2j$, $|x|\leq 2$, then
\begin{equation}\label{e:29}
|\partial_x^\beta w_j|\lesssim U_0^{1-j(p-1)+\frac{2j +|\beta|}k \frac{p-1}2},
\end{equation}
\begin{equation}\label{e:30}
|\partial_t\partial_x^\beta w_j|\lesssim U_0^{\frac{p+1}2-j(p-1)+\frac{2j +|\beta|}k \frac{p-1}2}.
\end{equation}

\item If $1\leq j\leq J$, then
\begin{equation}\label{e:31}
|U_j-U_0| \leq \frac 14(1-2^{-j})U_0,\quad
|U_j-U_0|\leq (1-2^{-j})(1+U_0)^{-\frac{p-1}2} U_0,
\end{equation}
\begin{equation}\label{e:31bis}
|\partial_tU_j-\partial_t U_0|\lesssim U_0.
\end{equation}

\item If $0\leq |\beta|\leq k-3-2j$, $|x|\leq 2$, then
\begin{equation}\label{e:32}
|\partial_x^\beta \Ens_j|\lesssim U_0^{1-j(p-1)+\frac{2j+2 +|\beta|}k \frac{p-1}2}.
\end{equation}

\item If $|x|\geq 2$, then
\begin{equation}\label{e:33}
|\partial_x^\beta U_j |\lesssim |x|^{-\frac {2k}{p-1}-|\beta|},
\quad
|\partial_x^\beta \Ens_j |\lesssim |x|^{-\frac {2k}{p-1}-2-|\beta|}.
\end{equation}
\end{enumerate}
\end{lem}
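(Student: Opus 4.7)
I would prove the lemma by induction on $j$ from $0$ to $J$. The base case $j = 0$ is Lemma~\ref{le:0}: items~(i)--(ii) are vacuous at $j = 0$, while (iii) reduces to \eqref{e:21} and (iv) to \eqref{e:22}. For the inductive step I would assume (i)--(iv) hold at levels $0,\dots,j-1$ with constants $a_{j-1}\geq\cdots\geq a_1$ and $t_{j-1}\leq\cdots\leq t_1$ already chosen, then prove (i), (ii), (iii), (iv) at level $j$ in this order, shrinking $a_j\leq a_{j-1}$ and $t_j\leq t_{j-1}$ as necessary.

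The crux is the estimate on $w_j$ in (i). The formula defining $w_j$ is precisely the Duhamel formula, at each fixed $x$, for the linearized ODE $\partial_{tt} w - p U_0^{p-1} w = \Ens_{j-1}$, built from the fundamental pair $\bigl(U_0^{(p+1)/2}, U_0^{-p}\bigr)$ recalled in Section~\ref{sec:2.1}. To estimate $\partial_x^\beta w_j$ I would commute $\partial_x^\beta$ past the $s$-integrals (the integration variable is $x$-independent), apply Leibniz~\eqref{lbz0}, and use the inductive bound~\eqref{e:32} on $\partial_x^{\beta'}\Ens_{j-1}$ together with \eqref{e:20} on $\partial_x^{\beta''} U_0^\rho$. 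Writing $W = s + A(x)$ so that $U_0\propto W^{-2/(p-1)}$, a direct computation of the exponents shows that $U_0^{-p}\Ens_{j-1}$ is a nonnegative power of $W$ whose integral from $0$ to $t$ is controlled by $W(t,x)$ to one higher power, while $U_0^{(p+1)/2}\Ens_{j-1}$ is a negative power of $W$ producing the dominant contribution near $s=t$. Multiplying by the outer factors $U_0^{(p+1)/2}$ and $U_0^{-p}$, both pieces collapse to the same power of $W$ and yield \eqref{e:29}; the bound \eqref{e:30} follows either by differentiating the formula in $t$ (the boundary contributions at $s=t$ cancel between the two integrals) or from the algebraic identity $\partial_t U_0\propto U_0^{(p+1)/2}$. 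Each $x$-derivative costs a factor $U_0^{(p-1)/(2k)}$ by \eqref{e:20}, which matches the $|\beta|$-term in the exponent, and the restriction $|\beta|\leq k-1-2j$ is exactly what the inductive range $|\beta|\leq k-3-2(j-1)$ for $\Ens_{j-1}$ permits.

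For (ii), writing $U_j - U_0 = \sum_{\ell=1}^{j}\chi_\ell w_\ell$ and invoking (i) at each level $\ell\leq j$, one obtains $|\chi_\ell w_\ell|\lesssim U_0\cdot U_0^{-\ell(p-1)(1-1/k)}$ on $\operatorname{supp}\chi_\ell\subset\{A(x)\leq 2a_\ell\}$, where $W\leq t_j + 2a_\ell$. Since $-\ell(p-1)(1-1/k)<0$ for $\ell\geq 1$ and $k\geq 2$, shrinking $a_j,t_j$ makes this factor arbitrarily small, giving both inequalities in \eqref{e:31} (for the second I would use $1+U_0\gtrsim W^{-2/(p-1)}$ on the relevant support), and \eqref{e:31bis} similarly from \eqref{e:30}. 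For (iii), the ODE satisfied by $w_j$ yields the cancellation identity
\begin{equation*}
\Ens_j = (1-\chi_j)\Ens_{j-1} + \Delta(\chi_j w_j) + \bigl[f(U_j) - f(U_{j-1}) - p\,\chi_j\, U_0^{p-1} w_j\bigr].
\end{equation*}
The first term is supported where $A(x)\geq a_j$, so $U_0$ is bounded there and the bound follows from \eqref{e:32} at level $j-1$ after inserting a positive power of $a_j$ to match the stronger exponent. The second term is expanded by Leibniz --- this is the step consuming two $x$-derivatives, whence the reduction from $k-1-2j$ in (i) to $k-3-2j$ in (iii). The last bracket I would split into $p\,\chi_j(U_{j-1}^{p-1}-U_0^{p-1})w_j$, controlled by (ii) at level $j-1$ via the first-order expansion of $u\mapsto u^{p-1}$, and the Taylor remainder $f(U_j)-f(U_{j-1})-f'(U_{j-1})\chi_j w_j$, controlled by \eqref{taylor1} with $u=U_{j-1}\simeq U_0$ and $v=\chi_j w_j$; each component contributes at least one extra factor of $U_0^{-(p-1)(1-1/k)}$ beyond $\Ens_{j-1}$, which is exactly the gain encoded in the exponent of \eqref{e:32} at level $j$. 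Derivatives $\partial_x^\beta$ of this identity are treated by Leibniz and Faa di Bruno~\eqref{fdb0}, applied to quantities whose bounds are already in hand.

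Finally, (iv) is immediate: for $|x|\geq 2$ one has $A(x)=|x|^k\geq 2^k$, so choosing $a_j\leq 2^{k-1}$ forces $\chi_j\equiv 0$ there, hence $U_j\equiv U_0$ and $\Ens_j\equiv \Ens_0$ on $\{|x|\geq 2\}$ and \eqref{e:33} reduces to \eqref{e:22}. The main obstacle, typical of such inductive blow-up ansatz constructions, is the exponent bookkeeping in (i) and (iii): verifying that the two time integrals defining $w_j$ converge at $s=0$ and reassemble to exactly the stated power of $U_0$ for every multi-index $\beta$, and checking in (iii) that every piece of the cancellation identity is strictly subleading to $\Ens_{j-1}$ in the regime where $U_0$ is large. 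The rest is Leibniz and Faa di Bruno applied to formulas already written down.
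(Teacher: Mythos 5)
Your proposal follows essentially the same route as the paper: induction on $j$ with base case Lemma~\ref{le:0}; bounding $\partial_x^\beta w_j$ by commuting $\partial_x^\beta$ past the $s$-integrals, observing that $\partial_x^\beta(U_0^{-p}\Ens_{j-1})$ and $\partial_x^\beta(U_0^{(p+1)/2}\Ens_{j-1})$ are respectively a nonnegative power and a power less than $-1$ of $t+A$ (which is exactly the paper's $\gamma$ and $\gamma'$ computation); noting the boundary-term cancellation at $s=t$ for $\partial_t w_j$; and using the same cancellation identity for $\Ens_j$ with the same split of the nonlinear remainder into $f(U_j)-f(U_{j-1})-f'(U_{j-1})\chi_j w_j$ and $(f'(U_{j-1})-f'(U_0))\chi_j w_j$, handled by Taylor with integral remainder plus Leibniz/Faa di Bruno. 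The only cosmetic differences are writing $f'(U_0)=pU_0^{p-1}$ explicitly and invoking a slight variant of \eqref{taylor1} where the paper uses the $\tfrac34 U_0\le U_j\le\tfrac54 U_0$ bound directly; these are equivalent.
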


\begin{rem} \label{eRemStr1}
We comment on the mechanism of the refined ansatz.
For the energy control which we establish in the next section, we need an estimate on the error term $ \|\mathcal E_J\| _{ L^2 } \lesssim t^{ (\frac {2} {p-1})^+ }$. (See formulas~\eqref{dtener}  and~\eqref{fForarap}.) By formula~\eqref{e:32}, this is achieved if $J > \frac {2} {p-1}$, which is the first condition in~\eqref{def:Jk}, and then $k$ sufficiently large (once $J$ is chosen), which is the second condition in~\eqref{def:Jk}. 
Note that for $p>3$, $J=1$ is enough, but one can never choose $J=0$, so a refined ansatz is always needed.
We see on formula~\eqref{e:32} that at each step, the error estimate improves by a factor $U_0^{ - (p-1) (1 - \frac {1} {k}) } \sim t^{2 ( 1 - \frac {1} {k})}$. It is clear then that the number of steps goes to $\infty $ as $p\to 1$. 
\end{rem} 

\begin{proof}[Proof of Lemma~$\ref{le:1}$]
Observe that \eqref{e:32} for $j=0$ is exactly \eqref{e:21} in Lemma~\ref{le:0}. 
Now, we proceed by induction on $j$: for any $1\leq j\leq J$, we prove that
 estimate \eqref{e:32} for $\Ens_{j-1}$ implies estimates \eqref{e:29}--\eqref{e:32} for $w_j$, $U_j$ and $\Ens_j$, for an appropriate choice of $a_j$ and~$t_{j}$.

\smallskip
Proof of \eqref{e:29}-\eqref{e:30}.
First, assuming \eqref{e:32} for $\Ens_{j-1}$, we show the following estimates related to the two components of $w_j$: for $|\beta|\leq k-1-2j$, $0<t\leq t_{j-1}$ and $|x|\leq 2$,
\begin{align}
\left|\partial_x^\beta\left(\int_0^t U_0^{-p} \mathcal E_{j-1} ds\right)\right| &\lesssim 
U_0^{-\frac{p-1}2-j (p-1)+\frac{2j+|\beta|}{k}\frac{p-1}2},\label{e:35}\\
\left|\partial_x^\beta\left(\int_{t}^{t_{j-1}} U_0^{\frac{p+1}2} \mathcal E_{j-1} ds\right)\right| &\lesssim 
U_0^{p+1-j (p-1)+\frac{2j+|\beta|}{k}\frac{p-1}2}.\label{e:36}
\end{align}
Indeed, we have by Leibniz's formula \eqref{lbz0}
\begin{equation*}
\partial_x^\beta \left(U_0^{-p}\Ens_{j-1}\right)=
\sum_{\beta'\leq\beta}\binom{\beta}{\beta'}\left(\partial_x^{\beta'}(U_0^{-p})\right)
\left(\partial_x^{\beta-\beta'}\Ens_{j-1}\right),
\end{equation*}
and thus, using \eqref{e:20} and \eqref{e:32},
\begin{align*}
|\partial_x^\beta \left(U_0^{-p}\Ens_{j-1}\right)|
&\lesssim 
\sum_{\beta'\leq\beta}\binom{\beta}{\beta'}
U_0^{-p+\frac{|\beta'|}k\frac{p-1}2}
 U_0^{1-(j-1)(p-1)+\frac{2j +|\beta|-|\beta'|}k \frac{p-1}2}\\
 &\lesssim U_0^{-j(p-1)+\frac{2j +|\beta|}k \frac{p-1}2}
 \lesssim (t+A)^\gamma,
\end{align*}
where for $j\geq 1$, $|\beta|\leq k$, 
\[
\gamma:=2j-\frac{2j+|\beta|}{k}=2j\left(1-\frac1k\right)-\frac{|\beta|}{k}\geq 0.
\]
Integrating on $(0,t)$ for $t\in (0,t_{j-1}]$, we obtain 
\[
\left|\partial_x^\beta\left(\int_0^t U_0^{-p} \mathcal E_{j-1} ds\right)\right| \lesssim
(t+A)^{\gamma+1}\leq U_0^{-\frac{p-1}2-j\left(1-\frac1k\right)(p-1)+\frac{|\beta|}{k}\frac{p-1}2},
\]
which is \eqref{e:35}.
Similarly, using Leibniz's formula, we check the following estimate
\[
|\partial_x^\beta ( U_0^{\frac{p+1}2}\Ens_{j-1})|
\lesssim U_0^{\frac{3p+1}2-j(p-1)+\frac{2j +|\beta|}k \frac{p-1}2}
 \lesssim (t+A)^{-\gamma'},
\]
where, using $0<j\leq J\leq \frac{p+1}{p-1}$,
\[
\gamma':=\frac{3p+1}{p-1}-2j+\frac{2j+|\beta|}{k}> 1.
\]
Thus, by time integration, for $t\in (0,t_{j-1}]$,
\[ 
\left| \int_t^{t_{j-1}} \partial_x^\beta ( U_0^{\frac{p+1}2}\Ens_{j-1}) ds\right|
\lesssim (t+A)^{-\gamma'+1}
\lesssim 
U_0^{p+1-j (p-1)+\frac{2j+|\beta|}{k}\frac{p-1}2},
\]
which is \eqref{e:36}.

Using Leibniz's formula, \eqref{e:20}, and~\eqref{e:35}-\eqref{e:36}, we deduce easily that, for any $\beta\in \N^N$, $|\beta|\leq k -1 -2j$,
\begin{align*}
\left|\partial_x^\beta\left( U_0^{\frac{p+1}2}\int_0^t U_0^{-p} \mathcal E_{j-1} ds\right)\right| &\lesssim 
U_0^{1-j (p-1)+\frac{2j+|\beta|}{k}\frac{p-1}2},\\
\left|\partial_x^\beta\left(U_0^{-p}\int_{t}^{t_{j-1}} U_0^{\frac{p+1}2} \mathcal E_{j-1} ds\right)\right| &\lesssim 
U_0^{1-j (p-1)+\frac{2j+|\beta|}{k}\frac{p-1}2}.
\end{align*}
Estimate \eqref{e:29} follows.
Moreover, by the definition of $w_j$ and setting $b=  \kappa^{\frac{p-1}2} \frac{p-1}{3p+1}$,
\begin{equation}\label{e:dtwj}
\partial_tw_j = - b
\left(\partial_t(U_0^{\frac {p+1}2}) \int_0^t U_0^{-p} \Ens_{j-1} ds  + \partial_t(U_0^{-p}) \int_{t}^{t_{j-1}} U_0^{\frac {p+1}2} \Ens_{j-1} ds \right) .
\end{equation}
Similarly as above, Leibniz's formula, \eqref{e:20}, and~\eqref{e:35}-\eqref{e:36} yield~\eqref{e:30}.
  Note that we have proved estimates~\eqref{e:29} and~\eqref{e:30} for all $0<t\le t _{ j-1 }$.

\smallskip
Proof of \eqref{e:31}-\eqref{e:31bis}.
For $0<t\leq t _{ j-1 } $ and $|x|\leq 2$,
by the estimate \eqref{e:20} on $w_j$ for $\beta=0$, the property $U_0\gtrsim 1$ for $ |x| \le 2$, and the definition of $\chi_j$, we have
\begin{equation*}
\chi_j |w_j|\lesssim \chi_j U_0^{1-j(1-\frac 1k)(p-1)} 
\lesssim \chi_j U_0^{1-(1-\frac 1k)(p-1)} 
\lesssim \chi_j (t+A)^{2-\frac 2k} U_0 
\lesssim (t +a_j)U_0 .
\end{equation*}
Choosing $0<a_j\leq 1$ and $0<t_j\leq t_{j-1}$ sufficiently small, for all $t\in (0,t_{j}]$,
\[
\chi_j |w_j| \leq 2^{-j-2} U_0 \quad \hbox{and} \quad 
\chi_j |w_j| \leq 2^{-j} (1+U_0)^{-\frac {p-1}2} U_0 .
\]
From now on, $a_j$ and $t_j$ are fixed to such values.
In the case $j=1$, this proves \eqref{e:31} for $|x|\leq 2$. For $2\leq j\leq J$, combining this estimate with \eqref{e:31} for $j-1$,
we find, for all $t\in (0,t_j]$ and $|x|\leq 2$,
\begin{equation}\label{point}
\sum_{\ell =1}^{j} \chi_\ell  |w_\ell |\leq \frac 14 (1-2^{-j}) U_0
\quad \hbox{and} \quad 
\sum_{\ell =1}^{j} \chi_\ell  |w_\ell |\leq (1-2^{-j})(1+U_0)^{-\frac {p-1}2} U_0,
\end{equation}
which implies \eqref{e:31} for $U_j$ and for $|x|\leq 2$. 
 
To prove \eqref{e:31bis} for $|x|\leq 2$, we note that by \eqref{e:30} with $\beta =0$ and $U_0\gtrsim 1$, 
\[
 \sum_{\ell =1}^{j} \chi_\ell |\partial_t w_\ell |\lesssim \sum_{\ell =1}^j\chi_\ell  U_0^{\frac{p+1}2-\ell (1-\frac 1k)(p-1)}
 =U_0  \sum_{\ell =1}^j\chi_\ell  U_0^{\frac{p-1}2 ( 1- 2\ell (1-\frac 1k) )}
\lesssim U_0.
\]

For $|x|\geq 2$, \eqref{on:A} implies that $A(x)\geq 2^k\geq 2a_1\geq\cdots\geq 2a_j$ and thus
$\chi_j(x)=0$ and $U_j(t,x)=U_0(t,x)$. The same applies to $\partial_t U_j$.

\smallskip
Proof of \eqref{e:32}.
Differentiating \eqref{e:dtwj} with respect to $t$, using the relations \eqref{e:19}, 
$\partial_{tt} (U_0^{\frac{p+1}2})=f'(U_0) U_0^{\frac{p+1}2}$ and
$\partial_{tt} (U_0^{-p})=f'(U_0) U_0^{-p}$
(these calculations are related to observations made in Section \ref{sec:2.1}), we check that $w_j$ satisfies
\[
\partial_{tt} w_j=f'(U_0) w_j+\Ens_{j-1}.
\]
Using also $U_j=U_{j-1}+\chi_j w_j$ and the definition of $\Ens_{j-1}$, we obtain
\begin{align*}
\Ens_j
&=\Ens_{j-1} 
-\chi_j \partial_{tt}w_j +\Delta (\chi_j w_j)+ f(U_j)-f(U_{j-1})\\
&= (1-\chi_j) \Ens_{j-1}+\Delta (\chi_j w_j) +f(U_j)-f(U_{j-1})-f'(U_0)\chi_j w_j.
\end{align*}
We estimate $\partial_x^\beta$ of each term on the right-hand side above for $|\beta|\leq k-3-2j$ and $|x|\leq 2$.
For the first term, recall that for $x$ such that $A(x)\leq a_j$, it holds
$1-\chi_j(x)=0$ and for any $\beta$, $\partial_x^{\beta} \chi_j(x)=0$.
Moreover, for $0<t\leq 1$, for $0\leq x\leq 2$ such that $A(x)\geq a_j$, it holds
$A(x)\approx 1$ and so $U_0(t,x)\approx 1$. Thus, using the Leibniz formula and \eqref{e:32} for $\Ens_{j-1}$, we find
\[
|\partial_x^\beta[(1-\chi_j) \Ens_{j-1}]|
\lesssim U_0^{1-(j-1)(p-1)+\frac{2j +|\beta|}k \frac{p-1}2}\lesssim U_0^{1-j(p-1)+\frac{2j+2 +|\beta|}k \frac{p-1}2}.
\]
Next, by Leibniz's formula, the properties of $\chi$ and $\chi_j$, the estimate \eqref{e:29} on $w_j$
and then $U_0\gtrsim 1$, we have, for $0<t<t_j$ and $|x|\leq 2$,
\[
|\partial_x^\beta\Delta (\chi_j w_j)|\lesssim 
\sum_{|\beta'|\leq |\beta|+2} |\partial_x^{\beta'} w_j|
\lesssim U_0^{1-j(p-1)+\frac{2j + 2+|\beta|}k \frac{p-1}2}.
\]
Last, we estimate $\partial_x^\beta[f(U_j)-f(U_{j-1})-f'(U_0)\chi_j w_j]$.
We begin with the case $\beta=0$.
Recall that by \eqref{e:31}, we have $0<\frac 34 U_0\leq U_j\leq \frac 54 U_0$, so that by elementary calculations
\[
\left|f(U_j)-f(U_{j-1})-f'(U_{j-1})\chi_j w_j\right|\lesssim  \chi_j U_0^{p-2}w_j^2
\]
and
\[
\left|f'(U_{j-1})-f'(U_0)\right|\lesssim U_0^{p-2} \sum_{\ell =1}^{j-1} \chi_\ell |w_\ell |.
\]
These estimates imply
\[
\left|f(U_j)-f(U_{j-1})-f'(U_0)\chi_j w_j\right|\lesssim
\chi_j U_0^{p-2} |w_j|\sum_{\ell =1}^{j}\chi_\ell  |w_\ell |.
\]
For $1\leq \ell \leq j$, using \eqref{e:29} and $U_0\gtrsim 1$, we have
\begin{align*}
U_0^{p-2}|w_j||w_\ell |&\lesssim U_0^{p-2} U_0^{1-j(1-\frac 1k)(p-1)}
U_0^{1-\ell (1-\frac 1k)(p-1)}\lesssim U_0^{p-(j+\ell )(1-\frac 1k)(p-1)}\\
&
\lesssim U_0^{p-(j+1)(1-\frac 1k)(p-1)}\lesssim U_0^{1-j(1-\frac 1k)(p-1)+\frac 1k(p-1)}.
\end{align*}
Thus, 
$|f(U_j)-f(U_{j-1})-f'(U_0)\chi_j w_j |\lesssim U_0^{1-j(1-\frac 1k)(p-1)+\frac 1k(p-1)}$ is proved.

Now, we deal with the case $1\leq |\beta|\leq k-3-2j$. By the Taylor formula with integral remainder, we have, for any $U$
and $w$,
\[
f(U+w)-f(U)-f'(U)w=w^2\int_0^1 (1-\theta)f''(U+\theta w) d\theta.
\]
Thus, by Leibniz's formula \eqref{lbz0}
\begin{multline*}
\partial_x^\beta\left[ f(U+w)-f(U)-f'(U)w\right]
\\=\sum_{\beta'\leq \beta} \binom{\beta}{\beta'} \left( \partial_x^{\beta-\beta'}(w^2)\right)
\int_0^1(1-\theta)\partial_x^{\beta'}[f''(U+\theta w)] d\theta.
\end{multline*}
Moreover, by the Faa di Bruno formula \eqref{fdb0}, for $\beta'\neq 0$, denoting
$n'=|\beta'|$,
\[
\partial_x^{\beta'} [f''(U+\theta w)]
=\sum_{\nu=1}^{n'}f^{(\nu+2)}(U+\theta w)\sum_{P(\beta',\nu)}(\beta'!)
\prod_{\ell =1}^{n'} \frac{(\partial_x^{\beta_\ell }(U+\theta w))^{\nu_\ell }}{(\nu_\ell !)(\beta_\ell !)^{\nu_\ell }}.
\]
To estimate the term $\partial_x^\beta[f(U_j)-f(U_{j-1})-f'(U_0)\chi_j w_j]$, we apply these formulas to $U=U_j$ and $w=\chi_j w_j$. For $\beta'\leq \beta$, using \eqref{e:29} and the properties of $\chi$, we have
\begin{align*}
\left|\partial_x^{\beta-\beta'}\left[(\chi_jw_j)^2\right]\right|
&\lesssim \sum_{\beta''\leq \beta-\beta'}
\left|\partial_x^{\beta''} (\chi_jw_j)\right|
\left|\partial_x^{\beta-\beta'-\beta''}(\chi_jw_j)\right|\\
&\lesssim U_0^{2-2j(p-1)+\frac{4j+|\beta|-|\beta'|}{k}\frac{p-1}2}.
\end{align*}
For $\beta'=0$ and $\theta\in [0,1]$, using also \eqref{point}, we obtain
\begin{align*}
\left|\left( \partial_x^{\beta}[(\chi_jw_j)^2]\right) f''(U_0+\theta \chi_jw_j)\right|
&\lesssim U_0^{2-2j(p-1)+\frac{4j+|\beta|}{k}\frac{p-1}2}U_0^{p-2}\\
&\lesssim U_0^{1-j(p-1)+\frac{2j+2+|\beta|}{k}\frac{p-1}2}.
\end{align*}
For $\beta'\neq 0$, $\beta'\leq \beta$ and $\theta\in [0,1]$, using \eqref{e:20}, \eqref{e:29} and \eqref{point},
we have (recall that the definition of $P(\beta',\nu)$ implies that $\sum_{\ell =1}^{n'}\nu_\ell =\nu$ and
$\sum_{\ell =1}^{n'}\nu_\ell |\beta_\ell |=|\beta'|$)
\begin{align*}
|\partial_x^{\beta'} [f''(U_{j-1}+\theta \chi_j w_j)]|
&\lesssim \sum_{\nu=1}^{n'} U_0^{p-\nu-2} \sum_{P(\beta',\nu)} 
\prod_{\ell =1}^{n'} \left(U_0^{1+\frac{|\beta_\ell |}{k}\frac{p-1}2}\right)^{\nu_\ell }\\
&\lesssim \sum_{\nu=1}^{n'} U_0^{p-\nu-2}U_0^{\nu+ \frac{|\beta'|}{k} \frac{p-1}2}
\lesssim U_0^{p-2+\frac{|\beta'|}{k}\frac{p-1}2}.
\end{align*}
Thus, similarly as before, it holds
\begin{equation*}
\left|\partial_x^{\beta-\beta'}\left[(\chi_j w_j)^2\right] \partial_x^{\beta'}\left[f''(U_{j-1}+\theta \chi_j w_j)\right]\right|
\lesssim U_0^{1-j(p-1)+\frac{2j+2+|\beta|}{k}\frac{p-1}2}.
\end{equation*}
Integrating these estimates in $\theta\in [0,1]$, we obtain
\begin{equation}\label{UN}
\left|\partial_x^{\beta} [f(U_j)-f(U_{j-1})-f'(U_{j-1})\chi_j w_j]\right| 
 \lesssim U_0^{1-j(p-1)+\frac{2j+2+|\beta|}{k}\frac{p-1}2}.
\end{equation}

\smallskip

By similar arguments, for any $U,W,w$, we have
\begin{equation*}
f'(U)-f'(W)=(U-W) \int_0^1 f''(W+\theta(U-W)) d\theta,
\end{equation*}
and thus
\begin{multline*}
\partial_x^{\beta}[w(f'(U)-f'(W))]
\\=\sum_{\beta'\leq \beta} \binom{\beta}{\beta'} \left(\partial_x^{\beta-\beta'}[w(U-W)]\right)
\int_0^1 \partial_x^{\beta'} [f''(W+\theta(U-W)) ] d\theta.
\end{multline*}
Moreover, for $\beta'\neq 0$,
\begin{multline*}
\partial_x^{\beta'} [f''(W+\theta(U-W)) ]\\ =
 \sum_{\nu=1}^{n'} f^{(\nu+2)}(W+\theta(U-W))\sum_{P(\beta',\nu)}(\beta'!)
\prod_{\ell =1}^{n'} \frac{\left(\partial_x^{\beta_\ell }(W+\theta(U-W))\right)^{\nu_\ell }}{(\nu_\ell !)(\beta_\ell !)^{\nu_\ell }}.
\end{multline*}
To estimate the term $\partial^\beta [\chi_j w_j(f'(U_{j-1})-f'(U_0))]$,
we apply these formulas to $U=U_{j-1}$, $W=U_0$ and $w=\chi_j w_j$.

For $\beta'\leq \beta$, using \eqref{e:29} and the properties of $\chi$, we have, for $1\leq \ell \leq j-1$,
\begin{equation*}
\left| \partial_x^{\beta-\beta'}[\chi_j w_j \chi_\ell w_\ell ]\right|
\lesssim U_0^{2-(j+\ell )(p-1)+\frac{2j+2\ell +|\beta|-|\beta'|}{k}\frac{p-1}2}.
\end{equation*}
For $\beta'=0$ and $\theta\in [0,1]$, from \eqref{point}, we obtain
\begin{equation*}
\left|\partial_x^{\beta}\left[\chi_j w_j(U_{j-1}-U_0)\right] f''(U_0+\theta (U_{j-1}-U_0))\right|
\lesssim U_0^{1-j(p-1)+\frac{2j+2+|\beta|}{k}\frac{p-1}2}.
\end{equation*}
For $\beta'\neq 0$, $\beta'\leq \beta$ and $\theta\in [0,1]$, by the formula above, using \eqref{e:20}, \eqref{e:29} and \eqref{point}, we have as before
\begin{align*}
|\partial_x^{\beta'}[f''(U_0+\theta (U_{j-1}-U_0))]|
&\lesssim \sum_{\nu=1}^{n'} U_0^{p-\nu-2} \sum_{P(\beta',\nu)} 
\prod_{\ell =1}^{n'} \left(U_0^{1+ \frac{|\beta_\ell |}{k}\frac{p-1}2}\right)^{\nu_\ell }\\
&\lesssim \sum_{\nu=1}^{n'} U_0^{p-\nu-2} U_0^{\nu+\frac{|\beta'|}{k}\frac{p-1}2}
\lesssim U_0^{p-2+ \frac{|\beta'|}{k}\frac{p-1}2}.
\end{align*}
Thus, we obtain
\begin{equation*}
\left|\partial_x^{\beta-\beta'}\left[\chi_j w_j(U_{j-1}-U_0)\right] \partial_x^{\beta'}\left[f''(U_0+\theta (U_{j-1}-U_0))\right]\right|
\lesssim U_0^{1-j(p-1)+\frac{2j+2+|\beta|}{k}\frac{p-1}2}.
\end{equation*}
Integrating in $\theta\in [0,1]$ and summing in $\beta'\leq \beta$, we obtain
\begin{equation}\label{DEUX}
\left| \partial^\beta [\chi_j w_j(f'(U_{j-1})-f'(U_0))]\right|
\lesssim U_0^{1-j(p-1)+\frac{2j+2+|\beta|}{k}\frac{p-1}2}.
\end{equation}
Combining \eqref{UN} and \eqref{DEUX}, we have proved for $t\in (0,t_{j}]$, $|x|\leq 2$,
\begin{equation*}
\left|\partial^\beta [f(U_j)-f(U_{j-1})-f'(U_0)\chi_j w_j]\right|
\lesssim U_0^{1-j(p-1)+\frac{2j+2+|\beta|}{k}\frac{p-1}2}.
\end{equation*}

In conclusion, we have estimated all terms in the expression of $\partial_x^\beta\Ens_j$ and \eqref{e:32} is now proved.

\smallskip

Finally, for $|x|\geq 2$, \eqref{on:A} implies that $A(x)\geq 2^k\geq 2a_1\geq\cdots\geq 2a_j$ and thus
$\chi_j(x)=0$, $U_j(t,x)=U_0(t,x)$ and $\Ens_{j}(t,x)= \Ens_0(t,x)$,
so that~\eqref{e:33} follows from~\eqref{e:22}.
\end{proof}
\section{Uniform bounds on approximate solutions}\label{sec:3}
 Let the  function $\chi $ be given by~\eqref{def:chi} and $U_J$ be defined as in \S\ref{s23} with $J$ and $k$ as in \eqref{def:Jk}.
Set
\begin{equation}\label{def:la}
\lambda= \min \left(J-\frac{2}{p-1},\frac 12\right) \in \left(0,\frac 12\right],
\end{equation}
and impose the following additional condition on $k$
\begin{equation}\label{on:k}
k\geq \frac{2(p+1)}{\lambda(p-1)} +2.
\end{equation}
For any $n$ large, let $T_n=\frac 1n<t_J$ and
\begin{equation}\label{def:Bn}
B_n=\sup_{t\in [T_n,t_J]} \|U_J(t)\|_{L^\infty}\quad \mbox{so that}\quad
\lim_{n\to \infty} B_n=\infty.
\end{equation}
We let $n$ be sufficiently large so that $B_n>1$, and 
we define the function $f_n:\R\to [0,\infty)$  by
\begin{equation}\label{def:fn}
f_n(u )= f(u) \chi\left(\frac{u}{B_n}\right)
\quad \mbox{so that}\quad f_n(u)=\begin{cases} f(u) & \mbox{for $|u|<B_n$}\\ 0 & \mbox{for $u>2B_n$} \end{cases}.
\end{equation}
Let $F_n(v)=\int_0^v f_n(w) dw$.
It follows from elementary calculations that for every $\alpha\in \N$, there exists a constant $C_\alpha>0$ independent of $n$, such that for all $u>0$,
\begin{equation}
|f_n^{(\alpha)}(u)|\leq C_\alpha u^{p-\alpha}.
\end{equation}
In particular, we observe that Taylor's estimates such as \eqref{taylor0}--\eqref{taylor} still hold for $F_n$ and $f_n$
with constants independent of $n$. 
We will refer to these inequalities for $F_n$ and $f_n$ with the same numbers~\eqref{taylor0}, \eqref{taylor1} and~\eqref{taylor}. 
In this proof, any implicit constant related the symbol $\lesssim$ is independent of $n$.

We define the sequence of solutions $u_n$ of
\begin{equation}\label{eq:un}
\left\{\begin{aligned}&\partial_{tt} u_n-\Delta u_n=f_n(u_n)\\
&u_n(T_n)=U_J(T_n),\quad \partial_t u_n(T_n)=\partial_t U_J(T_n).
\end{aligned}\right.\end{equation}
The nonlinearity $f_n$ being globally Lipschitz, the existence of a global solution $(u_n,\partial_t u_n)$ in the energy space is a consequence of standard arguments from semi-group theory.
Using energy estimates, we prove  uniform bounds on $u_n$ in the energy space.
For this we set, for all $t\in [T_n,t_J]$,
\begin{equation}\label{uneps}
u_n(t)=U_J(t)+\varepsilon_n(t),
\end{equation}
so that $(\varepsilon_n,\partial_t\varepsilon_n)\in \cont([T_n,t_J],H^1(\R^N)\times L^2(\R^N))$.

\begin{prop}\label{pr:2}
There exist $C>0$, $n_0>0$ and $0<\delta_0<1$ such that 
\begin{equation}\label{unif}
\|(\varepsilon_n(t),\partial_t \varepsilon_n(t))\|_{H^1\times L^2}\leq C (t-T_n)^{\frac\lambda2}.\end{equation}
for all $n\geq n_0$ and $t\in [T_n, T_n+\delta_0]$, where $\lambda $ is given by~\eqref{def:la}. 
\end{prop}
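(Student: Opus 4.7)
The equation for $\varepsilon_n := u_n - U_J$ reads
$$\partial_{tt}\varepsilon_n - \Delta\varepsilon_n = f_n(U_J+\varepsilon_n) - f_n(U_J) + \Ens_J,$$
with vanishing data at $t=T_n$ and with $f_n(U_J) = f(U_J)$ by the choice of $B_n$. I would run an energy method on the modified energy
$$\energy_n(t) := \tfrac12\int\bigl(|\partial_t\varepsilon_n|^2 + |\nabla\varepsilon_n|^2\bigr)\,dx - \int\bigl[F_n(U_J+\varepsilon_n) - F_n(U_J) - f_n(U_J)\varepsilon_n\bigr]\,dx,$$
whose derivative telescopes cleanly to
$$\tfrac{d}{dt}\energy_n = \int\partial_t\varepsilon_n\,\Ens_J\,dx - \int\bigl[f_n(U_J+\varepsilon_n) - f_n(U_J) - f_n'(U_J)\varepsilon_n\bigr]\partial_t U_J\,dx,$$
the bilinear term $(f_n(u_n)-f_n(U_J))\partial_t\varepsilon_n$ from the wave equation cancelling against its counterpart coming from $\partial_t$ of the Taylor correction of $F_n$.

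The first step is coercivity. By Taylor's inequality~\eqref{taylor0} applied to $F_n$, the potential part of $\energy_n$ is dominated pointwise by $U_J^{p-1}\varepsilon_n^2 + |\varepsilon_n|^{p+1} + U_J^{p-\bar p}|\varepsilon_n|^{\bar p+1}$. Each piece is controlled by Hölder and the Gagliardo--Nirenberg inequality~\eqref{gagl}, with the $L^q$ norms of $U_J$ estimated via Lemma~\ref{le:1} combined with the integral~\eqref{label4}. The crucial smallness device is
$$\|\varepsilon_n(t)\|_{L^2}^2 \leq (t-T_n)\int_{T_n}^t\|\partial_t\varepsilon_n(s)\|_{L^2}^2\,ds,$$
a consequence of $\varepsilon_n(T_n)=0$ and Cauchy--Schwarz, which trades singular $t^{-1}$ factors produced by $U_J$ for positive powers of $(t-T_n)$. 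One obtains, on $[T_n,T_n+\delta_0]$ for $\delta_0$ small and uniform in $n$, an equivalence $\energy_n(t) \gtrsim \|\partial_t\varepsilon_n(t)\|_{L^2}^2 + \|\nabla\varepsilon_n(t)\|_{L^2}^2$ under the bootstrap bound below.

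The second step is an upper bound on $\tfrac{d}{dt}\energy_n$. The source term is handled by Cauchy--Schwarz together with the bound on $\|\Ens_J(t)\|_{L^2}$ coming from~\eqref{e:32}--\eqref{e:33} and~\eqref{label4}. The nonlinear remainder is bounded via the Taylor estimates~\eqref{taylor1}--\eqref{taylor} together with $|\partial_t U_J| \lesssim U_0^{(p+1)/2}$ (from Lemmas~\ref{le:0}--\ref{le:1}), yielding integrals of the schematic types $\int U_J^\alpha\varepsilon_n^2$ and $\int U_J^\beta|\varepsilon_n|^{p+1}$. Each is handled by Hölder on $L^{N/2}$ (or $L^{(p+1)/(p-1)}$) times $L^{2^*}$ (or $L^{p+1}$), interpolating $\|\varepsilon_n\|_{L^q}$ between $L^2$ and $\|\nabla\varepsilon_n\|_{L^2}$ via Gagliardo--Nirenberg, and again exploiting the $(t-T_n)$ gain. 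The conditions~\eqref{def:Jk}, \eqref{def:la}, \eqref{on:k} on $J$, $\lambda$ and $k$ are precisely chosen so that every resulting exponent of $t$ and $(t-T_n)$ balances with a residual positive power matched to $\lambda$, producing a Gronwall-type differential inequality in $E(t) := \sup_{s\in[T_n,t]}\energy_n(s)$.

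Finally, a continuity/bootstrap argument closes the estimate: assuming $\energy_n(t) \leq \Gamma^2(t-T_n)^\lambda$ on some maximal subinterval of $[T_n,t_J]$, integrating the differential inequality from $T_n$ (where $\energy_n=0$) and choosing $\Gamma$ large and $\delta_0$ small (both independent of $n$ for $n\ge n_0$), one improves the bound to $\energy_n(t) \leq \tfrac12\Gamma^2(t-T_n)^\lambda$ on $[T_n,T_n+\delta_0]$, which propagates the bootstrap hypothesis and yields~\eqref{unif}. The main obstacle I anticipate is the careful bookkeeping of Gagliardo--Nirenberg exponents: every singular factor $t^{-\gamma}$ arising from an $L^q$-norm of $U_J^\alpha$ must be compensated by sufficient positive powers of $(t-T_n)$ coming from $\varepsilon_n(T_n)=0$, with the precise balance dictated by~\eqref{on:k} and by $J$ being taken large enough to make $\Ens_J$ suitably small.
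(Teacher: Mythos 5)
Your algebra for $\tfrac{d}{dt}\energy_n$ is correct, and the source term and third-order remainders could indeed be handled along the lines you sketch. But the coercivity step is a genuine gap, and it is precisely the point where the paper's proof is structurally different from what you propose.

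Written out, the potential part of your $\energy_n$ is $-\tfrac12\int f_n'(U_J)\varepsilon_n^2 - \int R_3$, where $R_3$ is the third-order Taylor remainder. The first term is a genuinely negative quadratic contribution of size $\sim\int(t+A(x))^{-2}\varepsilon_n^2$, i.e.\ comparable to $t^{-2}\|\varepsilon_n\|_{L^2}^2$ near the blow-up set. Your proposed smallness device $\|\varepsilon_n(t)\|_{L^2}^2\le(t-T_n)\int_{T_n}^t\|\partial_t\varepsilon_n(s)\|_{L^2}^2\,ds$ then produces a factor $(t-T_n)^2/t^2$ against the kinetic energy, and this factor is $O(1)$ --- not small --- as soon as $t$ is of order $T_n+\delta_0$ with $T_n=1/n\to0$. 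So the negative quadratic term cannot be absorbed, and the claimed equivalence $\energy_n(t)\gtrsim\|\partial_t\varepsilon_n(t)\|_{L^2}^2+\|\nabla\varepsilon_n(t)\|_{L^2}^2$ fails; indeed the quadratic form $\int|\nabla\varepsilon|^2-f_n'(U_J)\varepsilon^2$ is indefinite, and your device only compares $\|\varepsilon_n(t)\|_{L^2}$ to a time-integral of $\|\partial_t\varepsilon_n\|_{L^2}$, not to its value at time $t$, so it cannot give pointwise coercivity in any case.

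The paper avoids this issue with three coupled devices that your proposal does not have: (i) the change of variables $\varepsilon_n=Q^{1/2}z$ with $Q=(1-\chi+U_0)^{p+1}$, which makes $\partial_tQ\le0$ so that the term $\int Q\partial_tQ|\nabla z|^2$ in $\tfrac{d}{dt}\mathcal H$ has a favorable sign; (ii) the explicit subtraction of the \emph{second}-order term $F_n''(U_0)Qz^2$ inside $\mathcal H$, so that the potential part of $\mathcal H$ is a genuine third-order remainder plus the small commutator $F_n''(U_J)-F_n''(U_0)$ (controlled using $|U_J-U_0|\lesssim(1+U_0)^{-(p-1)/2}U_0$ from~\eqref{e:31}), both of which are dominated by $\mathcal N^2$; and (iii) the inclusion of the weighted $L^2$ quantity $t^{-2\sigma}Q^2z^2$ with $\sigma=3/4$ in $\mathcal H$ and $\mathcal N$, so that the smallness factor in the coercivity estimate is $t^{2\sigma-1}=t^{1/2}$, which is small as $t\downarrow0$ --- quite different from the factor $(t-T_n)/t$ that your device yields. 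Without (ii) and (iii), the negative $O(t^{-2}\|\varepsilon_n\|_{L^2}^2)$ term in the energy is uncontrollable, and without (i) the analogous $I_3$-type term in the time derivative would have the wrong sign. Your bookkeeping concern about Gagliardo--Nirenberg exponents is real but secondary; the primary missing idea is the weighted change of variables that removes the bad second-order term from the energy and replaces the $(t-T_n)/t$ gain by a genuine $t^{2\sigma-1}$ gain.
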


\begin{proof}
The equation of $\varepsilon_n$ on $[T_n,t_J]\times \R^N$ is
\begin{equation}\label{eq:wn}
\left\{\begin{aligned}&\partial_{tt} \varepsilon_n-\Delta \varepsilon_n=f_n(U_J+\varepsilon_n)-f_n(U_J) + \Ens_J\\
&\varepsilon_n(T_n)=0,\quad \partial_t \varepsilon_n(T_n)=0
\end{aligned}\right.\end{equation}
where we have used  from \eqref{def:Bn} and \eqref{def:fn} that  $f(U_J)=f_n(U_J)$ on $[T_n,t_J]\times \R^N$.

Define the auxiliary function $z$ as follows
\[
\varepsilon_n= Q^{\frac 12} z \quad \hbox{where} \quad Q=(1-\chi+U_0)^{p+1} ,
\]
 where, by abuse of notation, we denote $\chi (x) = \chi (  |x| )$.
We note that $Q \gtrsim 1$, $Q \lesssim t^{- \frac {2 (p+1) } {p-1}}$. Moreover, it follows from~\eqref{e:20}  that $ |\nabla U_0 | \lesssim Q^{ \frac {1} {p+1} + \frac {p-1} {2k (p+1)} }$, from which we deduce easily that
$ |\nabla Q |\lesssim t^{-\frac {1} {k}} Q$. One proves similarly that $ |\Delta  Q |\lesssim t^{-\frac {2} {k}} Q$. 
To write the equation of $z$, we compute
\begin{align*}
\partial_{tt} \varepsilon_n & =\partial_{tt}(Q^{\frac 12}z)=(\partial_{tt} Q^{\frac 12}) z + (p+1) (1-\chi+U_0)^{\frac {p-1}2} \partial_t U_0 \partial_t z
+Q^{\frac 12} \partial_{tt} z\\
&= (\partial_{tt} Q^{\frac 12}) z + Q^{-\frac 12} \partial_t (Q \partial_t z) .
\end{align*}
Thus, setting $G=Q^{\frac 12} (f'(U_0) Q^{\frac 12} - \partial_{tt} Q^{\frac 12})$, we obtain
\begin{equation} \label{eqforz} 
\begin{split} 
\partial_t (Q\partial_t z) =& Q^{\frac 12} \Delta (Q^{\frac 12} z) 
\\
& + Q^{\frac12} \left( f_n(U_J+Q^{\frac 12} z)-f_n(U_J)-f_n'(U_0)Q^{\frac 12} z\right) 
+ Gz+Q^{\frac 12}\Ens_J.
\end{split} 
\end{equation} 
Let $\sigma=\frac 34$.
We define the following weighted norm and energy functional for $z$,
\begin{align*}
\mathcal N&= \left(\int (Q\partial_t z)^2+ Q^2 |\nabla z|^2 + t^{-2\sigma} Q^2 z^2\right)^{\frac 12},\\
\mathcal H&= \int \Big[ (Q\partial_t z)^2+ Q^2 |\nabla z|^2 + t^{-2\sigma} Q^2 z^2
\\&\quad - Q \left(2F_n(U_J+Q^{\frac 12} z) -2F_n(U_J)-2F_n'(U_J)Q^{\frac 12} z- F_n''(U_0)Qz^2\right)\Big].
\end{align*}
We remark that the first two terms in $\mathcal H$ are the energy for the linear part of equation~\eqref{eqforz}. 
The third term yields the control of a weighted $L^2$ norm, and the last term is associated with the nonlinear terms in the equation. 

\textbf{Step 1.} Coercivity of the energy. We claim that, for $0<\delta\leq t_J$  and $0<\omega\leq 1$ sufficiently small,
for $n$ large,  if $\mathcal N\leq \omega$  and $T_n\leq t\leq \delta$ then
\begin{equation}\label{coer}
\mathcal N^2 \leq 2 \mathcal H,
\end{equation}
and
\begin{equation}\label{coer2}
\|(\varepsilon_n(t),\partial_t \varepsilon_n(t))\|_{\dot H^1\times L^2}\lesssim \mathcal N,\quad
\|\varepsilon_n(t)\|_{L^2}\lesssim t^\sigma  \mathcal N.
\end{equation}

\emph{Proof of \eqref{coer}.}
Let
\begin{equation} \label{fDfnLam} 
\GDT1 = | 2 F_n(U_J+Q^{\frac 12} z) - 2 F_n(U_J)- 2 F_n'(U_J)Q^{\frac 12} z-  F_n''(U_0)Qz^2 | .
\end{equation} 
The triangle inequality and the Taylor inequality \eqref{taylor0} yield
\begin{equation} \label{fEstLam0} 
\begin{split} 
\GDT1 \lesssim & | 2 F_n(U_J+Q^{\frac 12} z) - 2 F_n(U_J)- 2 F_n'(U_J)Q^{\frac 12} z- F_n''(U_J)Qz^2 |
\\ & +|F_n''(U_0)-F_n''(U_J)| Q z^2 \\  \lesssim & \GCT1
\end{split} 
\end{equation} 
where
\begin{equation} \label{fDefLam2} 
 \GCT1 = Q^{\frac{p+1}2}|z|^{p+1}+U_J^{p-\bar p}Q^{\frac{\bar p+1}{2}}|z|^{\bar p+1}
+U_0^{p-2}|U_0-U_J|Qz^2 .
\end{equation} 
Using $U_J\lesssim U_0$ and $ U_0 \lesssim Q^{\frac {1} {p+1}}$, we see that $U_J^{p-\bar p} \lesssim Q^{ \frac {p- \bar p} {p+1} }$. Moreover, since $|U_0-U_J|\lesssim (1+U_0)^{-\frac{p-1}2}U_0\lesssim Q^{-\frac{p-1}{2(p+1)}} U_0$ (see \eqref{e:31}), we obtain 
\begin{equation*} 
U_0^{p-2}|U_0-U_J|  \lesssim U_0^{p-1} Q^{-\frac{p-1}{2(p+1)}} \lesssim Q^{ \frac{p-1}{2(p+1)}} ,
\end{equation*} 
and so
\begin{equation} \label{fDefLam3} 
 \GCT1 \lesssim Q^{\frac{p+1}2}|z|^{p+1}+ Q^{\frac{\bar p+1}{2} + \frac {p- \bar p} {p+1} }|z|^{\bar p+1}
+  Q^{ \frac{p-1}{2(p+1)} + 1} z^2 .
\end{equation} 
It follows that
\begin{equation*} 
\int Q \GCT1 \lesssim \int Q^{\frac{p+3}2}|z|^{p+1}+\int Q^{\frac{\bar p+3}2+\frac{p-\bar p}{p+1}}|z|^{\bar p+1}
 +\int Q^{\frac{p-1}{2(p+1)}+2} z^2. 
\end{equation*} 
For the first term on the right-hand side above, we use $Q \gtrsim 1$, thus
\begin{equation} \label{fEstIp1} 
\int Q^{\frac {p+3}2} |z|^{p+1} =\int Q^{-\frac {p-1}2} |Q z|^{p+1} \lesssim \int |Qz|^{p+1} . 
\end{equation} 
Applying now~\eqref{gagl},  $|\nabla Q|\lesssim t^{-\frac 1k}Q$, and the definition of $\mathcal N$,
\begin{equation} \label{festQz} 
\begin{split} 
\int |Qz|^{p+1} &  \lesssim 
\left(\int |\nabla (Qz)|^2\right)^{\frac{N}4(p-1)}\left(\int Q^2z^2\right)^{ \frac {p+1} {2} - \frac {N} {4} (p-1) } 
\\ & 
\lesssim \left(\int Q^2|\nabla z|^2 + t^{-\frac 2k} \int Q^2 z^2\right)^{\frac{N}4(p-1)}\left(\int Q^2 z^2\right)^{ \frac {p+1} {2} - \frac {N} {4} (p-1)  } \\ & \lesssim \mathcal N^{p+1} .
\end{split} 
\end{equation} 
In the case $1<p\leq 2$, one has $\bar p=p$ and the second term is identical to the first one.
In the case $p>\bar p=2$, the second term $Q^{\frac52+\frac{p-2}{p+1}}|z|^{3}=Q^{\frac{7p+1}{2(p+1)}}|z|^3$ is estimated as follows (using $ |z|^3 \lesssim a^{p-2}  |z|^{p+1} + \frac {1} {a} z^2$ with $a= Q^{\frac {p-1} {p+1}}$, 
 $Q^{-1}\lesssim 1$ and $Q^{\frac{p-1}{2(p+1)}}\lesssim t^{-1}$)
\begin{align*}
Q^{\frac{7p+1}{2(p+1)}}|z|^3&\lesssim
 Q^{-\frac 32 \frac{p-1}{p+1}} Q^{p+1} |z|^{p+1} + Q^{\frac{5p+3}{2(p+1)}} z^2\\
 &\lesssim Q^{p+1}|z|^{p+1}+ Q^{\frac{p-1}{2(p+1)}}Q^2z^2\lesssim Q^{p+1}|z|^{p+1}+t^{-1}Q^2z^2,
\end{align*}
and so
\begin{equation} \label{fEstIp2} 
\int Q^{\frac{\bar p+3}2+\frac{p-\bar p}{p+1}}|z|^{\bar p+1} \lesssim \mathcal N^{p+1}+t^{2\sigma -1} \mathcal N^2.
\end{equation}
Last, since $Q^{\frac{p-1}{2(p+1)}}\lesssim t^{-1}$, we observe that
\[
\int Q^{\frac{p-1}{2(p+1)}} Q^2z^2\lesssim t^{-1} \int Q^2 z^2\lesssim t^{2\sigma -1} \mathcal N^2.
\]
In conclusion, we have obtained $\int Q \GDT1 \lesssim \int Q \GCT1 \lesssim t^{2\sigma-1} \mathcal N^2 + \mathcal N^{p+1}$, which implies that for $t$  and $\mathcal N$ small enough, $\mathcal H\geq \frac 12 \mathcal N^2$.
 
\emph{Proof of \eqref{coer2}.}
Since $\varepsilon_n=Q^{\frac 12}z$, the inequality $\|\varepsilon_n\|_{L^2}\lesssim t^{-1}\mathcal N$ follows readily from the definition of $\mathcal N$ and $Q\gtrsim 1$.
Next, using $|\nabla Q|\lesssim t^{-\frac 1k} Q$, we see that 
\[
\int |\nabla \varepsilon_n|^2=\int \Big| Q^{\frac 12}\nabla z+\frac 12 Q^{-\frac 12}z\nabla Q\Big|^2
\lesssim \int Q |\nabla z|^2 + t^{-\frac 2k} \int Q z^2 \lesssim \mathcal N^2.
\]
Last, using $|\partial_t Q|\lesssim Q^{\frac{p-1}{2(p+1)}} Q\lesssim t^{-\frac12} Q^{\frac{5p+3}{4(p+1)}}$, we have
\begin{multline*}
\int |\partial_t\varepsilon_n|^2=\int \Big| Q^{\frac 12}\partial_t z+\frac 12 Q^{-\frac 12}z \partial_t Q\Big|^2
\lesssim \int Q |\partial_t z|^2+\int |\partial_t Q|^2 Q^{-1} z^2\\
\lesssim \int Q^2 |\partial_t z|^2+ t^{-1} \int Q^{\frac{3p+1}{2(p+1)}} z^2\lesssim \mathcal N^2.
\end{multline*}
This completes the proof of \eqref{coer2}.

\medskip
 
\textbf{Step 2.} Energy control. 
We claim that for $0<\delta\leq t_J$ small enough and $C>0$ large enough,  for any $n$ large and for all $t\in [T_n,T_n+\delta]$
\begin{equation}\label{dtener}
\frac d{dt} \mathcal H \leq C \left[t^{-1+\lambda} \mathcal N+ t^{-\frac 12} \mathcal N^2 +\mathcal N^{p+1}\right].
\end{equation}

\emph{Proof of \eqref{dtener}.}
Taking the time-derivative of all the terms in $\mathcal H$, we obtain
\begin{align*}
\frac 12 \frac d{dt} \mathcal H&=\int \left( Q\partial_t z \partial_t(Q\partial_t z) + Q^2 \nabla z\cdot \nabla \partial_t z 
+ t^{-2\sigma} Q^{2} z \partial_t z\right)\\&\quad
- \int Q^{\frac 32} \left(f_n(U_J+Q^{\frac 12} z) -f_n(U_J)-f_n'(U_0)Q^{\frac 12}z\right)\partial_t z
\\&\quad +\int Q\partial_t Q |\nabla z|^2 + t^{-2\sigma}\int Q\partial_t Qz^2
-\sigma t^{-2\sigma-1}\int Q^2 z^2
\\&\quad-\frac 12 \int \partial_t Q \left(2F_n(U_J+Q^{\frac 12} z)-2F_n(U_J)-2F_n'(U_J)Q^{\frac 12}z-F_n''(U_0)Qz^2\right)
\\&\quad-\frac 12 \int \partial_t Q\left(f_n(U_J+Q^{\frac 12}z)-f_n(U_J)-f_n'(U_0)Q^{\frac 12}z\right)Q^{\frac 12}z
\\&\quad- \frac 12 \int Q \partial_t U_0\left( 2f_n(U_J+Q^{\frac 12}z)-2f_n(U_J)-2f_n'(U_J)Q^{\frac 12} z-f_n''(U_0)Q z^2\right)\\
&\quad- \frac 12 \int Q \partial_t (U_J-U_0)\left( 2f_n(U_J+Q^{\frac 12}z)-2f_n(U_J)-2f_n'(U_J)Q^{\frac 12} z\right)\\
&=I_1+I_2+I_3+I_4+I_5+I_6+I_7.
\end{align*}
First, we note that $\partial_tQ=(p+1)Q^{\frac p{p+1}} \partial_t U_0  \le 0$, so that
\begin{equation*} 
I_3 \le  -\sqrt{2(p+1)} \int U_0^{\frac {p+1}2} Q^{\frac{2p+1}{p+1}}|\nabla z|^2
- \sigma t^{-2\sigma-1} \int Q^{2} z^2.
\end{equation*} 
We now use equation~\eqref{eqforz} to replace the term $ \partial_t(Q\partial_t z) $ in $I_1$, and we obtain
\begin{align*}
I_1+I_2 &=
\int \left(Q^{\frac 32} \partial_t z \Delta (Q^{\frac 12} z)+Q^2 \nabla z\cdot \nabla \partial_t z\right)
\\&\quad + \int \left(Gz+Q^{\frac 12}\Ens_J \right) Q\partial_t z
+ t^{-2\sigma} \int Q^2 z \partial_t z
=I_8+I_9+I_{10}.
\end{align*}
The term $I_{10}$ is controlled using the Cauchy-Schwarz inequality,
\[|I_{10}|\leq \frac1{10} {|I_3|} + C t^{-2\sigma+1} \int (Q\partial_t z)^2
\leq \frac1{10}{|I_3|}+C t^{-2\sigma+1} \mathcal N^2
\leq \frac1{10}{|I_3|}+C t^{-\frac 12} \mathcal N^2
.\]
Next, integrating by parts,
\begin{align*}
I_8 & =-\int \nabla (Q^{\frac 12} z) \cdot \nabla (Q^{\frac 32} \partial_t z)+Q^2 \nabla z\cdot \nabla (\partial_t z)\\
& =-\int z\nabla (Q^{\frac 12}) \cdot \nabla (Q^{\frac 32} \partial_t z ) -\int Q^{\frac 12}(\partial_t z) \nabla z \cdot \nabla (Q^{\frac 32})\\
& =-\int Q \partial_t z\nabla z \cdot \nabla Q +\int \Delta (Q^{\frac 12}) Q^{\frac 32} z \partial_t z .
\end{align*}
By $|\nabla Q|\lesssim t^{-\frac 1k} Q$ and the Cauchy-Schwarz inequality,
\[
\left| \int Q \partial_t z\nabla z \cdot \nabla Q\right|
\lesssim t^{-\frac 1k} \mathcal N^2.
\]
Similarly, $|\Delta (Q^{\frac 12}) Q^{\frac 32}|\lesssim |\nabla Q|^2 +|\Delta Q|Q\lesssim t^{-\frac 2k} Q^2$, and so
\[
\left|\int \Delta (Q^{\frac 12}) Q^{\frac 32} z \partial_t z \right|\leq 
\int Q^2|\partial_t z|^2 + t^{-\frac 4k}\int Q^2z^2 \lesssim \mathcal N^2.
\]

We note that by Cauchy-Schwarz,
\[
|I_9|\lesssim \|G\|_{L^\infty}\mathcal N^2 +\|Q^{\frac 12} \Ens_J\|_{L^2} \mathcal N,
\]
and so, we only have to bound the $L^\infty$ norm of $G$ and the $L^2$ norm of $Q^{\frac 12} \Ens_J$.
We begin with
$G= Q^{\frac 12} \left( pU_0^{p-1}Q^{\frac 12}-\partial_{tt} Q^{\frac 12}\right)$.
Using $Q=(1-\chi+U_0)^{p+1}$ and the expressions of $\partial_{tt}U_0$ and $(\partial_t U_0)^2$, we observe that
\[
\partial_{tt} Q^{\frac 12}= \frac {p+1}2U_0^pQ^{\frac {p-1}{2(p+1)}}
+\frac {p-1}2U_0^{p+1}Q^{\frac{p-3}{2(p+1)}}.
\]
Thus,
\begin{align*}
&pU_0^{p-1}Q^{\frac 12}-\partial_{tt} Q^{\frac 12}
\\&\quad =\frac {p+1}2U_0^{p-1}Q^{\frac {p-1}{2(p+1)}}\left(Q^{\frac 1{p+1}}-U_0\right)
+\frac {p-1}2U_0^{p-1}Q^{\frac{p-3}{2(p+1)}}\left(Q^{\frac 2{p+1}}-U_0^2\right)\\
&\quad=\frac {p+1}2U_0^{p-1}Q^{\frac {p-1}{2(p+1)}}(1-\chi)+\frac {p-1}2U_0^{p-1}Q^{\frac{p-3}{2(p+1)}}(1-\chi)\left(1-\chi+2U_0\right).
\end{align*}
Since for $|x|>1$, we have $U_0\lesssim 1$ and $Q\lesssim 1$, we obtain 
  $\|G\|_{L^\infty}\lesssim 1$.

Now, we estimate $\|Q^{\frac 12}\Ens_J\|_{L^2}$ from Lemma~\ref{le:1}.
For $|x|\geq 2$, it follows from \eqref{e:33} that
\[
Q^{\frac 12}|\Ens_J|\lesssim |\Ens_J|\lesssim |x|^{-\frac{2k}{p-1}-2}.
\]
Note that for $N\geq 3$, $p-1\leq \frac 4{N-2}$ and so $\frac{2k}{p-1}+2\geq \frac{N-2}2k+2\geq N$.
Thus, the following bound holds $\|Q^{\frac 12}\Ens_J\|_{L^2(|x|>2)}\lesssim 1.$

Next, using \eqref{e:32}, we have for $|x|\lesssim 2$
\begin{equation*} 
\begin{split} 
Q^{\frac 12}|\Ens_J| & \lesssim Q^{\frac 12} U_0^{1-J(p-1)+\frac{(J+1) (p-1) }{k}}
\lesssim U_0^{\frac{p+3}2-J(p-1)+\frac{(J+1) (p-1)}{k} } \\ & 
\lesssim U_0^{\frac{p+3}2-J(1-\frac1k)(p-1)+\frac{p-1}k} .
\end{split} 
\end{equation*} 
Note that by~\eqref{def:la}, $J\ge \lambda + \frac {2} {p-1}$, so that $- J (1- \frac {1} {k}) + \frac {p-1} {k} \le - 2 (1- \frac {1} {k})+ \frac {p-1} {k}- \lambda (p-1) (1- \frac {1} {k}) $; and so 
\begin{equation*} 
Q^{\frac 12}|\Ens_J| \lesssim U_0^{\frac{p+3}2-2(1-\frac1k)+\frac{p-1}k - \lambda(p-1)(1-\frac1k)}
\lesssim U_0^{\frac{p-1}2+ \frac{p+1}k-\lambda(p-1)(1-\frac1k)} .
\end{equation*} 
Moreover, the   additional condition \eqref{on:k} is equivalent to
$\frac{p+1}{k}-\lambda(p-1)(1-\frac1k)\leq -\frac{\lambda(p-1)}{2}$.
Thus, for $|x|\lesssim 2$, 
\begin{equation} \label{fForarap} 
Q^{\frac 12}|\Ens_J|
\lesssim U_0^{ (1 - \lambda ) \frac{p-1}2}
\lesssim (t+A(x))^{-1+\lambda}\lesssim t^{-1+\lambda}.
\end{equation} 
Therefore, one obtains $\|Q^{\frac 12}\Ens_J\|_{L^2}\lesssim t^{-1+\lambda}$.
 
To complete the proof of \eqref{dtener}, we estimate $I_4$, $I_5$, $I_6$ and $I_7$.
First, using~\eqref{fDfnLam}--\eqref{fDefLam3}, and $  |\partial_t Q|\lesssim |\partial_t U_0| Q^{\frac{p}{p+1}}\lesssim U_0^{\frac{p+1}2} Q^{\frac{p}{p+1}} \lesssim Q^{\frac{ 3 p +1 }{2 (p+1) }} $, we obtain
\begin{equation*} 
\begin{split} 
 |\partial _t Q | \GDT1 &
\lesssim Q^{\frac{ 3 p +1 }{2 (p+1) }}  \GCT1 
 \\ & \lesssim Q^{ p+1 - \frac{ p(p-1) }{2 (p+1) }} |z|^{p+1}+ Q^{\frac{ 3 p +1 }{2 (p+1) } + \frac{\bar p+1}{2} + \frac {p- \bar p} {p+1} }|z|^{\bar p+1}
+    Q^{ 2 + \frac{p-1}{p+1} } z^2  .
\end{split} 
\end{equation*} 
Using $U_0 \gtrsim 1$ and the estimate~\eqref{festQz}, we treat the first term above as follows
\begin{equation*} 
\int Q^{ p+1 - \frac{ p(p-1) }{2 (p+1) }} |z|^{p+1} \lesssim \int | Q z|^{p+1}  \lesssim \mathcal N^{p+1} .
\end{equation*} 
In the case $1<p\leq 2$, one has $\bar p=p$ and the second term is identical to the first one.
In the case $p>\bar p=2$, the second term $Q^{\frac{ 4p }{ p+1 }}|z|^3$ is estimated as follows (using $ |z|^3 \lesssim a^{p-2}  |z|^{p+1} + \frac {1} {a} z^2$ with $a= Q^{\frac {p-1} {p+1}}$, 
 $Q^{-1}\lesssim 1$ and $Q^{\frac{p-1}{p+1}}\lesssim t^{-2}$)
\begin{equation*} 
Q^{\frac{ 4p }{ p+1 }}|z|^3 \lesssim Q^{- \frac{ p-1 }{ p+1 }} Q^{p+1}  |z|^{p+1} + Q^{\frac{ p- 1 }{ p+1 }} Q^2 z^2 \lesssim Q^{p+1}  |z|^{p+1} + t^{-2} Q^2 z^2 .
\end{equation*} 
Therefore
\begin{equation*} 
\int Q^{\frac{ 4p }{ p+1 }}|z|^3 \lesssim \mathcal N^{p+1}+t^{-2(1-\sigma)}\mathcal N^2
 \lesssim \mathcal N^{p+1}+t^{- \frac {1} {2} }\mathcal N^2.
\end{equation*} 
Since $Q^{ 2 + \frac{p-1}{p+1} } z^2 \lesssim t^{-2} Q^2 z^2 $, we have proved
\begin{equation} \label{fIntrFn1} 
 |I_4| \lesssim \int  | \partial _t Q| \GCT1 \lesssim \int Q^{\frac{ 3 p +1 }{2 (p+1) }}  \GCT1  \lesssim \mathcal N^{p+1}+t^{- \frac {1} {2} }\mathcal N^2.
\end{equation} 

We proceed similarly for $I_5$. Indeed, setting
\begin{equation*} 
\begin{split} 
\GDT2 = &  \left|f_n(U_J+Q^{\frac 12}z)-f_n(U_J)-f_n'(U_0)Q^{\frac 12}z\right|Q^{\frac 12}|z| \\ \le 
 &   \left|f_n(U_J+Q^{\frac 12}z)-f_n(U_J)-f_n'(U_J )Q^{\frac 12}z\right|Q^{\frac 12}|z|   + 
  |f_n'(U_0)-f_n'(U_J)|Qz^2
\end{split} 
\end{equation*} 
we deduce from~\eqref{taylor1} and Taylor's inequality that, with the notation~\eqref{fDefLam2}, 
\begin{equation*} 
\GDT2 \lesssim Q^{\frac {p+1}2} |z|^{p+1} + U_J^{p-\bar p} Q^{\frac {\bar p+1}2}|z|^{\bar p+1}+ U_0^{p-2}|U_0-U_J|Q|z|^2 \lesssim \GCT1.
\end{equation*} 
Using the last two inequalities in~\eqref{fIntrFn1}, we conclude that $|I_5|\lesssim \mathcal N^{p+1}+t^{-\frac 12}\mathcal N^2$.

Now, we estimate $I_6$, and we set
\begin{equation*} 
\GDT3=   | 2 f_n(U_J+Q^{\frac 12}z)-2f_n(U_J)-2f_n'(U_J)Q^{\frac 12} z-f_n''(U_J)Q z^2 | .
\end{equation*} 
By the triangle inequality, Taylor's inequality \eqref{taylor}, and $U_J^{-1}\lesssim U_0^{-1}$ (see~\eqref{e:31}),
\begin{equation*} 
\begin{split} 
\GDT3 \lesssim & | 2 f_n(U_J+Q^{\frac 12}z)-2f_n(U_J)-2f_n'(U_J)Q^{\frac 12} z-f_n''(U_0)Q z^2 |
\\ & +|f_n''(U_0)-f_n''(U_J)| Q z^2 \\ 
\lesssim & U_J^{-1} Q^{\frac{p+1}2}|z|^{p+1}+U_J^{p-\bar p -1}Q^{\frac{\bar p+1}{2}}|z|^{\bar p+1}
+U_0^{p-3}|U_0-U_J|Qz^2 \\ 
\lesssim & U_0^{-1} [ Q^{\frac{p+1}2}|z|^{p+1}+U_J^{p-\bar p }Q^{\frac{\bar p+1}{2}}|z|^{\bar p+1}
+U_0^{p-2}|U_0-U_J|Qz^2 ]
 \\ 
\lesssim & U_0^{-1} \GCT1 
\end{split} 
\end{equation*} 
with the notation~\eqref{fDefLam2}. 
Using $ |\partial_t U_0 | \lesssim U_0^{ \frac {p+1} {2} }$ and $U_0\lesssim Q^{\frac {1} {p+1}}$, 
we see that $Q  |\partial _t U_0 | \lesssim Q U_0^{ \frac {p+1} {2} } \lesssim Q U_0^{ \frac {p-1} {2} } U_0  \lesssim Q^{   \frac { 3p + 1 } {2 (p+1)}  }  U_0 $, 
hence $Q  |\partial_t U_0 | \GDT3 \lesssim Q^{   \frac { 3p + 1 } {2 (p+1)}  }  \GCT1$.
The last inequality in~\eqref{fIntrFn1} yields $|I_6|\lesssim \mathcal N^{p+1}+t^{-\frac 12}\mathcal N^2$.

Finally, we estimate $I_7$ and we set 
\begin{equation*} 
\GDT4 = |f_n(U_J+Q^{\frac 12}z)-   f_n(U_J)- f_n'(U_J)Q^{\frac 12} z| . 
\end{equation*} 
By the triangle inequality  and Taylor's expansion~\eqref{taylor}, 
\begin{equation*} 
\begin{split} 
\GDT4  \lesssim &  \Bigl|f_n(U_J+  Q^{\frac 12}z)-   f_n(U_J)- f_n'(U_J)Q^{\frac 12} z- \frac {1} {2} f''_{ n}(U_J) Q z^2 \Bigr| \\ & + \frac {1} {2}  | f'' _{ n}(U_J) |Q z^2 \\
   \lesssim & U_J^{-1} Q^{ \frac {p+1} {2}} |z|^{p+1}+ U_J ^{p-\bar p-1} Q^{ \frac {\bar p+1} {2} }  |z|^{\bar p+1} + U_J^{p-2} Q z^2
\end{split} 
\end{equation*} 
Using  $Q  | \partial _t (U_J - U_0) | \lesssim Q U_0$ (see~\eqref{e:31bis}), $U_J^{-1} \lesssim U_0^{-1}$, and $U_J \lesssim U_0$, we obtain
\begin{equation*} 
Q  | \partial _t (U_J - U_0) | \GDT4  \lesssim 
Q^{ \frac {p+3} {2}} |z|^{p+1}+ U_0 ^{p-\bar p} Q^{ \frac {\bar p+3} {2} }  |z|^{\bar p+1} + U_0^{p-1} Q^2 z^2 .
\end{equation*} 
Since $ U_0 \lesssim Q^{\frac {1} {p+1}}$ and $U_0^{p-1} \lesssim t^{-2}$, we deduce that
\begin{equation*} 
Q  | \partial _t (U_J - U_0) | \GDT4  \lesssim 
Q^{ \frac {p+3} {2}} |z|^{p+1}+ Q^{ \frac {p-\bar p} {p+1} + \frac {\bar p+3} {2} }  |z|^{\bar p+1} + t^{-2} Q^2 z^2 .
\end{equation*} 
Applying~\eqref{fEstIp1}-\eqref{festQz} for the first term and~\eqref{fEstIp2} for the second term, we see that
$|I_7|\lesssim  \mathcal N^{p+1}+t^{-\frac 12}\mathcal N^2$. 
Collecting the above estimates, we have proved \eqref{dtener}.
\medskip

\textbf{Step 3.} Conclusion.
The values of $\delta\in (0,t_J]$ and $0< \omega \le 1$ are now fixed so that \eqref{coer}, \eqref{coer2} and \eqref{dtener} hold.
Since $\mathcal N(T_n)=0$, the following is well-defined
\[
T_n^\star=\sup\{t\in [T_n,\delta]: \mbox{for all $s\in [T_n,t]$, $\mathcal N(s) \leq \omega$}\}
\]
and by continuity, $T_n^\star\in (T_n,\delta]$. For all $t\in [T_n,T^\star_n]$, using \eqref{dtener}, we find
(recall that $\lambda\in (0,\frac 12]$)
\[
\frac{d}{dt} \mathcal H \leq C \left[ t^{-1+\lambda} + t^{-\frac 12}+ 1 \right] \leq C t^{-1+\lambda}.
\]
Let $t\in [T_n,T_n^\star]$.
Since $\mathcal H(T_n)=0$, we obtain by integration on $[T_n,t]$
\[
\mathcal H(t) \leq C (t^{\lambda}-T_n^{\lambda}) \leq C (t-T_n)^{\lambda}.
\]
Therefore, using the definition of $T_n^\star$ and \eqref{coer}, for all $t\in [T_n,T_n^\star]$,
\[
 \mathcal N(t) \leq C (t-T_n)^{\frac \lambda 2}.
\]
In particular, there exists $\delta_0>0$ independent of $n$ such that, for $n$ large, it holds $T_n^\star\geq T_n+\delta_0$.
 Moreover, using \eqref{coer2},  for all $t\in [T_n,T_n+\delta_0]$, 
\[
\|(\varepsilon_n(t),\partial_t \varepsilon_n(t))\|_{H^1\times L^2}
\lesssim \mathcal N(t) \lesssim (t-T_n)^{\frac \lambda 2},
\]
which completes the proof of Proposition~\ref{pr:2}.
\end{proof}

\section{End of the proof of Theorem~\ref{TH:1}}\label{S:proof}
Let $ \Cset $ be any compact set of $\R^N$ included in the ball of center $0$ and radius~$1$
(by the scaling invariance of equation \eqref{wave}, this assumption does not restrict the generality).
It is well-known that there exists a smooth function $Z:\R^N\to [0,\infty)$ which vanishes exactly on $ \Cset $
(see \emph{e.g.}~Lemma 1.4, page 20 of \cite{MoRe}).
For $p$ as in \eqref{on:p}, choose $J$ and $k$ satisfying \eqref{def:Jk} and \eqref{on:k}. Define the function $A:\R^N\to [0,\infty)$ by
\[
A(x)=\left(Z(x) \chi( {  |x|}) + (1-  \chi( {  |x|}) ) |x|\right)^k,
\]
where $\chi $ is given by~\eqref{def:chi}.
It follows that the function $A$ satisfies \eqref{on:A} and vanishes exactly on $ \Cset $.

We consider the global solutions $u_n$ of equation \eqref{eq:un}, $\varepsilon_n$ defined by \eqref{uneps} and we set
for $0\leq t\leq t_J-T_n$,
\[
V_n(t)=U_J(T_n+t),\quad \eta_n(t)=\varepsilon_n(T_n+t),\quad \Fns_n(t)=\Ens_J(T_n+t).
\]
It follows from Proposition~\ref{pr:2} that there exist $0<\delta_0< t_J$, $0<\lambda\leq \frac 12$, and $C>0$ such that,
for $n$ large and for all $t\in [0,\delta_0]$,
\begin{equation}\label{unifeta}
\|(\eta_n(t),\partial_t \eta_n(t))\|_{H^1\times L^2}\leq C t^{\frac\lambda2}.
\end{equation}
Moreover, it follows from \eqref{eq:wn} that
\begin{equation}\label{eq:eta}
\partial_{tt}\eta_n-\Delta \eta_n = f_n(V_n+\eta_n)-f_n(V_n)+\Fns_n.
\end{equation}
Using the estimate $|f_n(u+v)-f_n(u)|\lesssim (|u|^{p-1}+|v|^{p-1})|v|$ and the embeddings
$H^1(\R^N)\hookrightarrow L^{p+1}(\R^N)$, $L^{\frac{p+1}p}(\R^N)\hookrightarrow H^{-1}(\R^N)$, we deduce that
\[
\|\partial_{tt}\eta_n\|_{H^{-1}}
\lesssim \|\eta_n\|_{H^1}+\|V_n\|_{H^1}^{p-1}\|\eta_n\|_{H^1}+\|\eta_n\|_{H^1}^{p}+\|\Fns_n\|_{L^2}
\]
so that by the estimates of Lemmas~\ref{le:0} and~\ref{le:1}, there exist $C,c>0$ such that, for all $t\in (0,\delta_0]$,
\begin{equation}\label{unifeta2}
\|\partial_{tt}\eta_n\|_{H^{-1}} \leq Ct^{-c}.
\end{equation}
Given $\tau\in (0,\delta_0)$, it follows from \eqref{unifeta} and \eqref{unifeta2} that the sequence 
$(\eta_n)_{n\geq 1}$ is bounded in $L^{\infty}((\tau,\delta_0),H^1(\R^N))\cap W^{1,\infty}((\tau,\delta_0),L^2(\R^N)) \cap W^{2,\infty}((\tau,\delta_0),H^{-1}(\R^N))$. Therefore, after possibly extracting a subsequence (still denoted by $\eta_n$), there exists $\eta\in L^{\infty}((\tau,\delta_0),H^1(\R^N))\cap W^{1,\infty}((\tau,\delta_0),L^2(\R^N)) \cap W^{2,\infty}((\tau,\delta_0),H^{-1}(\R^N))$ such that
\begin{align}
&
\eta_n \mathop{\longrightarrow}_{n\to\infty} \eta \quad \mbox{in $L^{\infty}((\tau,\delta_0),H^1(\R^N))$ weak$^*$}\label{conv1}\\
&
\partial_t\eta_n \mathop{\longrightarrow}_{n\to\infty} \partial_t\eta \quad \mbox{in $L^{\infty}((\tau,\delta_0),L^2(\R^N))$ weak$^*$}\\
&
\partial_{tt}\eta_n \mathop{\longrightarrow}_{n\to\infty} \partial_{tt}\eta \quad \mbox{in $L^{\infty}((\tau,\delta_0),H^{-1}(\R^N))$ weak$^*$}\label{conv3}\\
&
\eta_n \mathop{\longrightarrow}_{n\to\infty} \eta \quad \mbox{weakly in $H^1(\R^N)$, for all 
$t\in [\tau,\delta_0]$}\label{conv4}\\
&
\partial_t\eta_n \mathop{\longrightarrow}_{n\to\infty}\partial_t\eta \quad \mbox{weakly in $L^2(\R^N)$, for all 
$t\in [\tau,\delta_0]$}.\label{conv5}
\end{align}
Since $\tau\in (0,\delta_0)$ is arbitrary, a standard argument of diagonal extraction shows that there exists a function 
$\eta\in L^{\infty}_{\rm loc}((0,\delta_0),H^1(\R^N))\cap W^{1,\infty}_{\rm loc}((0,\delta_0),L^2(\R^N)) \cap W^{2,\infty}_{\rm loc}((0,\delta_0),H^{-1}(\R^N))$ such that (after extraction of a subsequence) \eqref{conv1}--\eqref{conv5}
hold for all $0<\tau<\delta_0$. Moreover, \eqref{unifeta} and \eqref{conv4}--\eqref{conv5} imply that
\begin{equation}\label{unifetalim}
\|(\eta(t),\partial_t\eta(t))\|_{H^1\times L^2}\leq C t^{{\frac\lambda2}}, \quad t\in (0,\delta_0),
\end{equation}
and \eqref{unifeta2} and \eqref{conv3} imply that
\begin{equation}\label{unifetalim2}
\|\partial_{tt}\eta\|_{L^\infty((\tau,\delta_0),H^{-1})}\leq C \tau^{-c}, \quad \tau\in (0,\delta_0).
\end{equation}
In addition, it follows easily from \eqref{eq:eta}, \eqref{def:Bn}, \eqref{def:fn} and the convergence properties \eqref{conv1}--\eqref{conv5} that
\begin{equation}\label{eq:w}
\partial_{tt} \eta-\Delta \eta=f(U_J+\eta)-f(U_J) + \Ens_J
\end{equation}
in $L^{\infty}_{\rm loc}((0,\delta_0),H^{-1}(\R^N))$. Therefore, setting
\[
u(t)=U_J(t)+\eta(t), \quad t\in (0,\delta_0),
\]
we observe that the function $u\in L^{\infty}_{\rm loc}((0,\delta_0),H^1(\R^N))\cap W^{1,\infty}_{\rm loc}((0,\delta_0),L^2(\R^N)) \cap W^{2,\infty}_{\rm loc}((0,\delta_0),H^{-1}(\R^N))$ and satisfies
$\partial_{tt} u-\Delta u =f(u)$ in $L^{\infty}_{\rm loc}((0,\delta_0),H^{-1}(\R^N))$.
It is  a well-known property of the energy subcritical wave equation (corresponding to assumption \eqref{on:p})
that then it holds the stronger property
\begin{equation}\label{5.12}
u\in \cont((0,\delta_0),H^1(\R^N))\cap \cont^1((0,\delta_0),L^2(\R^N))\cap\cont^2((0,\delta_0),H^{-1}(\R^N)).
\end{equation}
We refer for example to Proposition 3.1 and Lemma 2.1 in \cite{GV85}.

Finally, we prove estimates \eqref{xE} and \eqref{xnotE}.
For $x_0\not\in  \Cset $, there exist $r>0$ and $C>0$ such that $A(x)\geq C$ for all $x\in \R^N$ such that $|x-x_0|<r$. 
 In particular, there exist $0<c<C$ such that $ c \le  |U_0 (t,x)| \le C $ for $|x-x_0|<r$ and $0 <t \le \delta _0$.
Using~\eqref{e:20}, \eqref{e:22}, \eqref{e:29}, \eqref{e:31bis}  and~\eqref{e:33}, we easily deduce that      
$|U_J(x)| +  |\nabla U_J | +|\partial_t U_J(x)|\leq C'$
for some constant $C'>0$. Estimate \eqref{xnotE} then follows from~\eqref{unifetalim}.
For $x_0\in  \Cset $, \eqref{e:24}, \eqref{e:25}, \eqref{e:31} and \eqref{e:31bis} imply, for
$t\in (0,\delta_0)$,
\begin{gather*}
 t^{-\mu} \lesssim \|U_J(t)\|_{L^2(|x-x_0|<r)}\lesssim t^{-\frac {2}{p-1}}, \\
 t^{-\mu-1} \lesssim\|\partial_t U_J(t)\|_{L^2(|x-x_0|<r)}\lesssim t^{-\frac {2}{p-1}-1},
\end{gather*}
where $\mu=\frac 2{p-1}-\frac{N}{2k}$.
Estimate \eqref{xE}, and more precisely estimates \eqref{Qrp} and \eqref{dtQrp} then follow from \eqref{unifetalim}.

Now, we justify the last part of Remark~\ref{rk:2}. If $x_0\in  \Cset $ and $ \Cset $ contains a neighborhood of $x_0$ then $A(x)=0$ on this neighborhood and the lower estimate 
easily follows.
In the case where $x_0\in  \Cset $ is isolated, the function $A$ can be chosen so that $A(x)=|x|^k$ in a neighbourhood of $x_0$
(see Remark~\ref{rk:4}). In particular, by \eqref{label4} and a similar estimate for $\partial_t U_0$, we obtain
for small $r>0$, $\|u(t)\|_{L^2(|x-x_0|<r)}\lesssim t^{-\frac2{p-1}+\frac N{2k}}$ and
$\|\partial_t u(t)\|_{L^2(|x-x_0|<r)}\lesssim t^{-1-\frac2{p-1}+\frac N{2k}}$.

\end{document}